\renewcommand{\phi}{\varphi}
\newtheorem{theorem}{Theorem}[section]
\newtheorem{lemma}[theorem]{Lemma}
\newtheorem{corollary}[theorem]{Corollary}
\newtheorem{observation}[theorem]{Observation}
\newtheorem{defi}[theorem]{Definition}
\newenvironment{emdef}{\begin{defi} \rm}{ \end{defi}}
\newtheorem{exa}[theorem]{Example}
\newenvironment{remark}{\begin{rem} \rm}{ \end{rem}}
\newtheorem{rem}[theorem]{Remark}
\newtheorem{claim}[theorem]{Claim}
\DeclareMathOperator{\range}{range}
\DeclareMathOperator{\Id}{Id}
\DeclareMathOperator{\I}{\mathcal{I}}
\DeclareMathOperator{\SF}{SF}
\DeclareMathOperator{\diagmod}{DiagonalizationModule}
\DeclareMathOperator{\emdiagmod}{\textit{DiagonalizationModule}}
\newcommand{\rel}[1]{\mathrel{#1}}
\newcommand{\dmpar}[1]{\diagmod{(#1)}}
\newcommand{\emdmpar}[1]{\emdiagmod{(#1)}}
\newcommand{\concat}{\widehat{\phantom{\alpha}}}
\newcommand{\HSF}{hereditarily self-full\xspace}
\newcommand{\CCR}{co-ceer resistant\xspace}
\DeclareMathOperator{\Ceers}{\mathbf{Ceers}}
\DeclareMathOperator{\dark}{\textrm{Dark}}
\def\G1{\hbox{$\displaystyle{\mbox{\ding{172}}}$}}
\title[Self-full ceers and the uniform join operator]{Self-full ceers and the uniform join operator}
\author[Andrews]{Uri Andrews}
\address{Department of Mathematics\\
	University of Wisconsin\\
	Madison, WI 53706-1388\\
	USA}
\email{\href{mailto:andrews@math.wisc.edu}{andrews@math.wisc.edu}}
\urladdr{\url{http://www.math.wisc.edu/~andrews/}}
\author[Schweber]{Noah Schweber}
\address{Department of Mathematics\\
	University of Wisconsin\\
	Madison, WI 53706-1388\\
	USA}
\email{\href{mailto:ndschweber@gmail.com}{ndschweber@gmail.com}}
\author[Sorbi]{Andrea Sorbi}
\address{Dipartimento di Ingegneria Informatica e Scienze Matematiche\\
	Universit\`a Degli Studi di Siena\\
	I-53100 Siena, Italy}
\email{\href{mailto:andrea.sorbi@unisi.it}{andrea.sorbi@unisi.it}}
\urladdr{\url{http://www3.diism.unisi.it/~sorbi/}}
\thanks{Andrews was partially supported
by NSF Grant 1600228. Andrews and Sorbi were supported by the project
\emph{Positive preorders and computable reducibility on them, as a
mathematical model of databases}, grant number AP05131579 of the Science
Committee of the Republic of Kazakhstan. Sorbi is a member of
INDAM-GNSAGA. Sorbi's research was partially supported by PRIN 2017 Grant
``Mathematical Logic: models, sets, computability''.}
\keywords{Computably enumerable equivalence relation; self-full
equivalence relation; computable reducibility on equivalence relations}
\subjclass[2010]{03D30, 03D45}
\begin{document}

\begin{abstract}
A computably enumerable equivalence relation (ceer) $X$ is called self-full
if whenever $f$ is a reduction of $X$ to $X$ then the range of $f$
intersects all $X$-equivalence classes. It is known that the infinite
self-full ceers properly contain the dark ceers, i.e. the infinite ceers
which do not admit an infinite computably enumerable transversal. Unlike
the collection of dark ceers, which are closed under the operation of uniform join, we
answer a question from \cite{joinmeet} by showing that there are
self-full ceers $X$ and $Y$ so that their uniform join $X\oplus Y$ is
non-self-full. We then define and examine the  \HSF ceers, which are the
self-full ceers $X$ so that for any self-full $Y$, $X\oplus Y$ is also
self-full: we show that they are closed under uniform join, and that every non-universal degree in $\Ceers_{{}/\I}$ have infinitely many incomparable \HSF strong minimal covers. In particular, every non-universal ceer is bounded by a  \HSF ceer. Thus the \HSF ceers form a properly intermediate class in between the dark ceers and the infinite
self-full ceers which is closed under $\oplus$.
\end{abstract}

\maketitle

\section{Introduction}\label{sct:introduction}
We investigate the notion of \emph{self-fullness}, which has turned out to be
quite important in the study of computably enumerable equivalence relations
(called \emph{ceers}) under computable reducibility. We recall that if $R,S$
are equivalence relations on the set of natural numbers $\omega$, then $R$ is
\emph{computably reducible} (or, simply, \emph{reducible}) to $S$ (notation:
$R \leq S$) if there is a computable total function $f$ such that
\[
(\forall x,y)[x  \rel{R} y \Leftrightarrow f(x) \rel{S} f(y)].
\]
In this case we write $f: R \rightarrow S$. This reducibility (due to
Ershov~\cite{Ershov:NumberingsI}, see also \cite{Ershov:positive}) has been
widely exploited in recent years as a convenient tool for measuring the
computational complexity of classification problems in computable
mathematics. For instance (see~\cite{Fokina-et-al-several}) the isomorphism
relation for various familiar classes of computable groups is
$\Sigma^1_1$-complete under $\leq$. Restricted to ceers, this reducibility
has been used to study familiar equivalence relations from logic, such as the
provable equivalence relation of sufficiently expressive formal systems (see
\cite{andrews2017survey} for a survey), word problems and isomorphism
problems of finitely presented groups (\cite{Miller, Nies-Sorbi}), c.e.\
presentations of structures (see e.g.\
\cite{fokina2016linear,gavruskin2014graphs}).

As a reducibility, $\leq$ gives rise in the usual way to a degree
structure. Due to the importance of ceers within general equivalence
relations on $\omega$, a considerable amount of attention has been given to
its substructure, called $\Ceers$, consisting of the degrees of ceers. The
systematic study of $\Ceers$, a poset with a greatest element (usually called
the \emph{universal element}), was initiated by Gao and Gerdes \cite{Gao-Gerdes}. Its
algebraic structure, in particular its structure under joins and meets was
thoroughly investigated by Andrews and Sorbi~\cite{joinmeet}, who have
proposed a partition of the ceers into the three following classes: the
\emph{finite} ceers, i.e. the ceers with only finitely many equivalence
classes; the \emph{light} ceers, i.e. the ceers $R$ possessing an infinite
c.e. transversal (an infinite c.e. set $W$ such that $x \cancel{\rel{R}} y$
for all $x,y \in W$ with $x \ne y$), or equivalently the ceers $R$ such that
$\Id \leq R$, where $\Id$ denotes the identity ceer); the \emph{dark} ceers,
i.e. the ceers which are neither finite nor light. These classes have been
extensively investigated in relation to the existence or non-existence of
joins and meets in the poset $\Ceers$. For instance no pair of incomparable
degrees of dark ceers has join or meet. The classes of degrees corresponding
to the classes of the above partition are first order definable in $\Ceers$
in the language of posets.

Another class of ceers which has emerged in~\cite{joinmeet} consists of the
self-full ceers, where a ceer $R$ is \emph{self-full} if and only if whenever
a computable function $f$ provides a reduction $f: R\rightarrow R$, then
$\range(f)$ intersects all $R$-equivalence classes. Equivalently, $R$ is
self-full if and only if $R\oplus \Id_1 \nleq R$, where $\Id_{1}$ is the ceer
with exactly one equivalence class. Notice that a degree containing a
self-full ceer consists in fact only of self-full ceers. Let $\mathcal{I}$ ,
$\dark$, $\SF$ denote respectively the classes of finite ceers, dark ceers,
and self-full ceers. It was shown in \cite{joinmeet} that $\mathcal{I} \cup \dark
\subseteq \SF$ and the inclusion is proper. This follows for instance from the fact
(\cite[Theorem~4.10]{joinmeet}) that every non-universal degree in $\Ceers$
is bounded by a self-full degree. Clearly, a self-full ceer bounding a light
ceer cannot be dark. In fact it can be even proved
(\cite[Theorem~7.9]{joinmeet}) that in $\Ceers_{/\I}$ every non-universal element has infinitely many distinct self-full strong
minimal covers. In Theorem \ref{thm:HSF-Istrongminimalcovers}, we show the analogous result for the  \HSF degrees. The partial order $\Ceers_{/\I}$ is defined by
quotienting $\Ceers$ modulo $\equiv_{\I}$, where $R\leq_{\I} S$ if there is
some  $F\in \mathcal{I}$ so that $R \leq S\oplus F$. Recall $\oplus$ is the
operation of \emph{uniform join} on equivalence relations, for which, given
equivalence relations $R,S$, we define $R\oplus S=\{(2x,2y): (x,y) \in R\}
\cup \{(2x+1, 2y+1): (x,y) \in S\}$.

The self-full ceers and their degrees
play an important role in the theory of ceers. The following are a few examples of the prominent role the self-full ceers have played in the theory of ceers. The existence in $\Ceers_{/\I}$ of infinitely
many self-full strong minimal covers above any non-universal degree has been
exploited in the recent paper~\cite{ASSFirstOrderTheory} to show, among other
things, that the first order theory of the degrees of light ceers is
isomorphic to true first-order arithmetic. The self-full degrees form an
automorphism base of the continuum many automorphisms of the poset
$\Ceers$ \cite[Corollary~11.5]{joinmeet}. The degrees of self-full
ceers are first order definable in $\Ceers$ in the language of posets. They coincide exactly with the non-universal meet-irreducible degrees \cite[Theorem 7.8]{joinmeet}.

The three classes of the partition of ceers (finite, light, and dark)
introduced in~\cite{joinmeet} are closed under the operation of uniform join.
Closure under $\oplus$ is an important issue for a class of ceers, not only
as regards the investigation of the existence of infima or suprema of the degrees of
the ceers in the class, but also because uniform joins correspond to
coproducts in the category of equivalence relations. If $R,S$ are equivalence
relations, then a \emph{morphism} $\mu: R \rightarrow S$, from $R$ to $S$, is
a function mapping $R$-equivalence classes to $S$-equivalence classes, for
which there is a computable function $f$ such that for every $x$, $\mu$ maps
the $R$-equivalence class of $x$ to the $S$-equivalence class of $f(x)$. So
a class of ceers is closed under $\oplus$ if and only if the corresponding
full subcategory of ceers is closed under coproducts of any pair of objects.
The category-theoretic approach to numberings and equivalence relations is
due to Ershov, see in particular \cite{Ershov:NumberingsII}. Since reductions
correspond to monomorphisms, by \cite[Lemma~1.1.]{joinmeet} the
category-theoretic jargon allows for yet another characterization of the
self-full ceers,  namely a ceer $R$ is self-full if and only if every
monomorphism $\mu:R \rightarrow R$ is an isomorphism.

A natural question is therefore whether or not the self-full ceers are closed
under $\oplus$ (see \cite[Question~1]{joinmeet}). We answer this question
(Theorem~\ref{thm:not-closed}) by showing that there are self-full ceers $X,
Y$ such that $X\oplus Y$ is not self-full. Notice that both $X$ and $Y$ must
be infinite since $F \oplus X$ is self-full
whenever $F$ is finite and $X$ is self-full \cite[Corollary~4.3]{joinmeet}.

Motivated by the fact that the uniform join of two self-full ceers need not
be self-full, in the last section of the paper we introduce the notion of a
ceer $X$ being \emph{hereditarily-self-full}, i.e. $X\oplus Y$ is self-full
whenever $Y$ is self-full. Let $\textrm{HSF}$ be the class of \HSF ceers. By
\cite[Corollary~4.3]{joinmeet} we have that $\textrm{HSF}$ contains the finite
ceers. On the other hand we show in Corollary~\ref{cor:hereditarily} that all
dark ceers are hereditarily-self-full. However we show that this inclusion is proper,
i.e. $\dark \subset \textrm{HSF}^{\infty}$ (where the latter class
$\textrm{HSF}^{\infty}$ consists of the infinite \HSF ceers). In Theorem \ref{thm:HSF-Istrongminimalcovers}, we show that every $\I$-degree has infinitely many incomparable \HSF strong minimal covers. In particular, every non-universal degree is bounded by an \HSF degree, so there are light \HSF degrees.
It follows that
\[
\dark\cup \I \subset \textrm{HSF} \subset \SF.
\]

Finally we make two observations, first in Observation~\ref{obs:not-hereditarily} that there is no ``hereditarily
hereditarily self-full'' ceer. That is, for every hereditarily self-full $X$
we can find a self-full ceer $Y$ so that $X\oplus Y$ is not hereditarily
self-full. Secondly in Observation~\ref{HSFclosed} that the hereditarily
self-full ceers are closed under $\oplus$ so if the $Y$ above is hereditarily
self-full (instead of just self-full) then $X\oplus Y$ is hereditarily
self-full as well. Thus the \HSF ceers can, in future analysis, often replace the self-full ceers in cases where a class which is closed under $\oplus$ is needed.

Throughout the paper, if $R$ is an equivalence relation on $\omega$, and
$x\in \omega$, then $[x]_R$ will denote the $R$-equivalence class of $x$. In the proofs of Theorem~\ref{thm:not-closed} and Theorem~\ref{thm:light-hsf}
we will be building ceers, satisfying certain requirements, using the
``collapsing'' technique. That is, when building a ceer $R$ in stages, we start with
$R_{0}=\Id$. The ceer $R_{s+1}$ will be an extension of  $R_{s}$ obtained by
$R$-collapsing at $s+1$ finitely many pairs of numbers $(k,k')$, where we say
that we \emph{$R$-collapse $k,k'$} at $s+1$ if we merge together in $R_{s+1}$
the $R_{s}$-equivalence classes $[k]_{R_{s}}$ and $[k']_{R_{s}}$, so that $\rel{R_{s+1}}$ is the equivalence relation generated by $R_s$ along with the finitely many pairs of numbers which we $R$-collapse at $s+1$.

\section{Self-fullness is not closed under uniform join}
In this section we show that the collection of self-full ceers is not closed under
$\oplus$.

\begin{theorem}\label{thm:not-closed}
There are self-full ceers $X$ and $Y$ so that $X\oplus Y$ is non-self-full.
\end{theorem}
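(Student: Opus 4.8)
The plan is to build $X$ and $Y$ by the stagewise collapsing technique, starting from $\Id$, while simultaneously constructing a reduction $g\colon (X\oplus Y)\oplus\Id_1\to X\oplus Y$ witnessing that $X\oplus Y$ is not self-full, and meeting the requirements that force $X$ and $Y$ to be individually self-full. For self-fullness of $X$ we must diagonalize against every partial computable $\varphi_e$: if $\varphi_e$ ever looks like a reduction $X\to X$ that misses a class, we act to defeat it. The standard way to do this (as in \cite{joinmeet}) is to reserve a ``fresh'' element $n_e$ that $\varphi_e$'s range is currently avoiding and then, by collapsing, force $\varphi_e$ to either fail to be a reduction or to eventually hit $[n_e]_X$; the same scheme runs for $Y$. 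The novelty is that these self-fullness strategies for $X$ and for $Y$ must coexist with a global strategy producing the non-self-full reduction $g$ on the join.

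The key idea for making $X\oplus Y$ non-self-full is to arrange that, on the ``extra'' $\Id_1$ summand, $g$ sends the single extra point into some $X$-class, and that the odd side $Y$ is eventually mapped (via a reduction arising from the construction) onto a \emph{proper} sub-collection of the classes of $X\oplus Y$, so that one $(X\oplus Y)$-class is omitted and the extra point can be absorbed there. Concretely: I would try to build a reduction $h\colon Y\to X$ whose range misses exactly one $X$-class $[c]_X$, together with the identity-style embedding of $X$ into the even part of $X\oplus Y$; then $g$ defined by $g(2x)=2x$, $g(2y+1)=2\cdot h(y)$, and $g(\text{extra pt})=2c$ is a reduction $(X\oplus Y)\oplus\Id_1\to X\oplus Y$ that hits every class, except we must be careful — we instead want $g$ to \emph{miss} nothing while having domain one class bigger, i.e. we exploit that $h$'s range misses $[c]_X$ to give $g$ room. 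The real trick is ensuring $X$ and $Y$ are each self-full even though there's a reduction $Y\to X$ missing a class: self-fullness of $Y$ only constrains reductions $Y\to Y$, and self-fullness of $X$ only constrains reductions $X\to X$, so a cross-reduction $Y\to X$ missing a class is entirely compatible, and in fact we should make $X$ have ``one more class'' worth of structure than $Y$ in a controlled way.

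The main obstacle I anticipate is the interaction between the two self-fullness requirement streams and the need to keep the cross-reduction $h\colon Y\to X$ injective-on-classes-and-missing-$[c]_X$ intact: every time we $X$-collapse two classes to defeat some $\varphi_e$ in the $X$-self-fullness module, we may destroy the property that $h$ is a reduction (two $Y$-classes that were sent to distinct $X$-classes might now be sent to the same one, which is fine, but two $Y$-classes sent to the \emph{same} $X$-class with $Y$ not collapsed would be fatal) — so the collapses on the $X$ side must be ``mirrored'' by collapses on the $Y$ side, or else pre-planned to respect the $h$-fibers. I would handle this by building $h$ so that distinct $Y$-classes sit over distinct $X$-classes \emph{with room to spare}, reserving infinitely many ``spare'' $X$-classes (including $[c]_X$) that are never in the range of $h$ and are used as the targets of the diagonalization collapses for $X$, so that $X$-collapses never identify two $X$-classes both in $\range(h)$ unless we simultaneously collapse their $h$-preimages on the $Y$ side. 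A priority argument organizing these modules, with the non-self-fullness strategy for the join at highest priority, should then close the construction; verification that $X,Y$ are self-full is the routine part, and verification that $g$ is a total reduction of $(X\oplus Y)\oplus\Id_1$ into $X\oplus Y$ is the payoff.
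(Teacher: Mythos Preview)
Your proposed reduction $g$ is not a reduction. You set $g(2x)=2x$ and $g(2y+1)=2h(y)$; but then for any $y$, the two inputs $2y+1$ and $2h(y)$ are in distinct $X\oplus Y$-classes (one odd, one even) yet $g(2y+1)=2h(y)=g(2h(y))$, so $g$ collapses inequivalent inputs. More generally, a cross-reduction $h\colon Y\to X$ missing one $X$-class does not by itself yield a self-reduction of $X\oplus Y$ missing a class: to absorb the odd side into the even side you must simultaneously move the even side somewhere class-disjoint from $h(Y)$, which forces a reduction $X\le Y\oplus\Id_1$, and combined with $Y\le X$ this essentially makes $X\equiv Y\oplus\Id_1$---at which point the question of whether $X\oplus Y$ is non-self-full becomes the question of whether $Y\oplus Y$ is, i.e.\ the very theorem you are trying to prove.

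The paper's construction avoids this circularity by building the self-reduction $f\colon X\oplus Y\to X\oplus Y$ directly, with $f$-orbits that genuinely weave between the even and odd sides rather than factoring through a single cross-reduction. The crucial device is declaring numbers to be ``$X$-bound'' or ``$Y$-bound'': once $2x$ is $Y$-bound, all further iterates $f^{(m)}(2x)$ are chosen odd, so only finitely many iterates of $2x$ lie on the $X$-side. This finiteness is what makes the self-fullness diagonalization for $X$ tractable: to defeat a putative reduction $\varphi_j\colon X\to X$ one must $X$-collapse a pair and then propagate that collapse along the entire $f$-orbit to keep $f$ a reduction; if the orbit stayed in $X$ forever this would be an infinite cascade, but $Y$-boundedness cuts it off after finitely many $X$-steps. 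Your sketch contains no analogue of this mechanism, and the ``reserve spare $X$-classes'' idea does not address the orbit-propagation problem, which is the real obstacle.
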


\begin{proof}
We construct a ceer $Z$, which will be $X\oplus Y$ for ceers $X$ and $Y$. We
let $X$ refer to $Z\restriction{\textrm{Evens}}$ (i.e.\  $x\rel{X}y$ if and
only if $2x\rel{Z} 2y$) and $Y$ refer to $Z \restriction{\textrm{Odds}}$
(i.e.\  $x\rel{Y}y$ if and only if $(2x+1) \rel{Z} (2y+1)$). We enumerate the
ceer $Z$ in stages. At stage $s$ we define a ceer $Z_s$ so that $Z=\bigcup_s
Z_s$ is our desired final ceer.
We build $Z_{s+1}$ extending $Z_{s}$ by the collapsing technique
described at the end of Section~\ref{sct:introduction}. Finally, we let $X_s=
Z_s \restriction{\textrm{Evens}}$, and similarly $Y_s
=Z_s\restriction{\textrm{Odd}}$.
\emph{$X$-collapsing $k$ and $k'$} at any stage means that we $Z$-collapse
$2k$ and $2k'$. Similarly, \emph{$Y$-collapsing $k$ and $k'$}  means that we
$Z$-collapse $2k+1$ and $2k'+1$.
 	
We also construct a reduction function $f:Z\rightarrow Z$. In addition to
ensuring that $f$ is a reduction and that its image omits the $Z$-class of
$0$, we have the requirements:
	
\begin{itemize}
\item[$SF_{j,k}^X$:] If $\phi_j$ is a reduction of $X$ to $X$, then $[k]_X$
    intersects the range of $\phi_j$.
\item[$SF_{j,k}^Y$:] If $\phi_j$ is a reduction of $Y$ to $Y$, then $[k]_Y$
    intersects the range of $\phi_j$.
\end{itemize}

We will often say that a number $z$ is in $X$ if $z$ is even, and $z$ is in
$Y$ if $z$ is odd. We will also say that some numbers will be
\emph{$X$-bound} and other numbers may be \emph{$Y$-bound}.	
Namely, when we say that a number $u$ is $X$-bound, this means that if $l$ is
the greatest number such that $f^{(l)}(u)$ has been already defined then as
we choose further $f$-images of the number, i.e. we define $f^{(m)}(u)$ for
$m> l$, we will choose from $X$ (in other words, the image will be an even
number). When we say that $u$ is $Y$-bound, then we will choose from $Y$ (in other words, the image will be an odd
number).  	

We describe a module called $\dmpar{x}$ that we will use to ensure that $\phi_j$ is not a
reduction of $X$ to $X$. We call this module (at some stage $s$) when we have
an element $x$ such that $2x$ is $Y$-bound and we have $\phi_j(x)\downarrow$ and
$\phi_j(x) \cancel{\rel{X_s}}x$.
We may also use this module to ensure that $\phi_j$ is not a reduction of $Y$
to $Y$ if we have an element $y$ such that $2y+1$ is $X$-bound and we have
$\phi_j(y) \downarrow$ and $\phi_j(y) \cancel{\rel{Y_s}}y$.
Though we discuss the diagonalization module for $X$, and the strategy for
$SF_{j,k}^X$, everything is symmetric for $Y$.

\begin{remark}
When dealing with $SF^X$-requirements, we let $\hat{f}(u)= \frac{f(2u)}{2}$,
if $f(2u) \in X$. We define $\hat{f}^{(n)}(u)=v$ if and only if
$f^{(n)}(2u)=2v$.
Notice that we abuse notation in that $\hat{f}^{(n)}$ may properly contain
the $n$th iterate of the function $\hat{f}$. If $\hat{f}^{(n)}(u)$ and
$\hat{f}^{(n)}(v)$ are both defined, then it is easy to see, using that $f: X
\oplus Y \longrightarrow X \oplus Y$ is a reduction, that $u \rel{X} v$ if
and only if $\hat{f}^{(n)}(u) \rel{X} \hat{f}^{(n)}(v)$. Symmetrically, when
dealing with $SF^Y$-requirements, we let $\hat{f}(u)= \frac{f(2u+1)-1}{2}$.
In this case $\hat{f}^{(n)}(u)=v$ if and only if $f^{(n)}(2u+1)=2v+1$.
\end{remark}

\subsubsection*{Informal description of $\dmpar{x}$.}

\subsubsection*{Goal 1}
The first goal of the module is to find some
 element $z$ (which might not be
$x$)
so $2z$ is $Y$-bound
and a stage $t>s$ so that for every $n$ such that $\hat{f}^{(n)}(z)$ is
defined, $\phi_j(z)\cancel{\rel{X_t}} \hat{f}^{(n)}(z)$. More precisely, let
$S$ be the set of $n$ for which, by stage $s$, $f^{(n)}(2x)$ is determined
and in $X$. Notice that no other $f^{(m)}(2x)$ is in $X$ (since
$2x$ is $Y$-bound so all later choices of $f^{(m)}(2x)$ will be odd). If
$\phi_j(x) \cancel{\rel{X_{s}}} \hat{f}^{(n)}(x)$ for any $n \in S$, then we
let $z=x$, and we move to Goal 2.

Otherwise, wait for a stage $t>s$ such that at this stage
$\phi_j(\hat{f}^{(n)}(x))$ converges for each $n \in S$. As we will argue in
Lemma~\ref{lem:main-satisfaction}, either we see at this point that
$\phi_{j}$ is not a reduction on the elements $\{\hat{f}^{(n)}(x): n\in S\}$,
or we see that there is some $m \in S$, $m>0$, such that, taking
$z=\hat{f}^{(m)}(x)$, we have that $\phi_j(z)\cancel{\rel{X_t}}
\hat{f}^{(n)}(z)$, for every $n$ so that $\hat{f}^{(n)}(z)$ is defined. After
this we move to Goal~2.
	
\subsubsection*{Goal 2}
We are given a $z$
so that $2z$ is $Y$-bound and
 so that $\phi_j(z)$ converges by stage $t$ and is not
$X_t$-equivalent to any $\hat{f}^{(n)}(z)$ which is already defined. We
consider a new element $w$ and wait for a stage $s'>t$ at which $\phi_j(w)$
converges. In the meantime, we ensure that $f^{(n)}(2w)$ is in $X$ if and
only if $f^{(n)}(2z)$ is in $X$. The fact that $w$ is new will ensure that
the class of $2w$ does not intersect the range of $f$. Thus we may later
$X$-collapse $w$ with $z$ (by $Z$-collapsing $2w$ with $2z$) if we so wish.
Once $\phi_j(w)$ converges at $s'$, if $\phi_j(w)\cancel{\rel{X_{s'}}}
\phi_j(z)$, then we $X$-collapse $w$ with $z$ (i.e., we $Z$-collapse
$2w$ with $2z$). Since $\phi_j(z)$ is not $X$-equivalent to any element
$\hat{f}^{(n)}(z)$ and the class of $2w$ does not
intersect the image of $f$, this does not cause $\phi_j(z)$ to $X$-collapse
with $\phi_j(w)$. Indeed, the $X$-collapse of $w$ with $z$ entails only (as
$f$ is a reduction from $Z$ to $Z$) the $Z$-collapse of each $f^{(n)}(2z)$
with $f^{(n)}(2w)$, i.e., the $X$-collapse of $\hat{f}^{(n)}(z)$ with
$\hat{f}^{(n)}(w)$. Therefore $\phi_j(z)$ is not $X$-collapsed to any element
as a consequence of the execution of Goal~2.

We have thus diagonalized to ensure that $\phi_j$ is not a reduction of $X$
to $X$. 	

\medskip

We now describe the strategy to satisfy $SF_{j,k}^X$. The strategy to satisfy
$SF_{j,k}^Y$ is symmetric.
	
\subsubsection*{Informal description of the $SF_{j,k}^X$-strategy}
First, suppose that $2k$ is either not bound by a higher-priority requirement
or is $Y$-bound by a higher-priority requirement. If it is not bound by a
higher-priority requirement, then we make $2k$ $Y$-bound. When the strategy
is called at stage $s$, we may have already defined $f(2k), \ldots,
f^{(l)}(2k)$ for some $l$. This means that we will choose $f^{(m)}(2k)$ to be
in $Y$ (i.e.,  odd) for $m>l$. Next we wait for $\phi_j(k)$ to
converge, at say stage $s$. If $\phi_j(k) \rel{X_s} k$, then we have
satisfied the requirement. Otherwise, we use $\dmpar{k}$ to ensure that
$\phi_j$ is not a reduction of $X$ to $X$.

Next, we consider the case where $2k$ is $X$-bound by a higher-priority
requirement, thus we cannot use the previous strategy. We begin by choosing a
new element $k'$, and we ensure that if we have defined $f(2k), \ldots,
f^{(l)}(2k)$ already, then we define $f^{(m)}(2k')$ for $m\leq l$ to be in
$X$ if and only if $f^{(m)}(2k)$ is in $X$, and we make $2k'$ $X$-bound.
Next, we wait for $\phi_j(k')$ to converge, at, say, stage $s$. If
$\phi_j(k')$ converges and is already $X$-equivalent to $k'$, then we
$X$-collapse $k'$ with $k$ and we are done. Otherwise, $\phi_j(k')$ converges
and is not $X_s$-equivalent to $k'$. In this case, we determine that $2k'$ is
$Y$-bound and call $\dmpar{k'}$ to ensure that $\phi_j$ is not a reduction of
$X$ to $X$.

\subsection*{Construction:}
As anticipated at the beginning of the proof, we enumerate our desired ceer
$Z$ in stages, so that  at stage $s$, $Z_s$ will be a ceer, and we let $X_s$ be
$Z_s\restriction{\textrm{Evens}}$ and let $Y_s$ be
$Z_s\restriction{\textrm{Odds}}$. 		

We say that a requirement $SF^X_{j,k}$ or $SF^Y_{j,k}$ \emph{requires
attention at stage $s$} if it has not acted since it was last initialized, or
if it sees some computations $\phi_j(u)$ converge for some numbers $u$, and
it had been waiting for these computations to converge. Requirements may
determine that numbers are \emph{$X$-bound} or \emph{$Y$-bound}. When we
write that a strategy makes a number $k$ $X$-bound or $Y$-bound, it
simultaneously makes $f^{(n)}(k)$ $X$-bound (or $Y$-bound) for every $n$ so
that $f^{(n)}(k)$ is already defined. If no active strategy is making a
number $X$-bound or $Y$-bound, then we say that the number is \emph{free}. A
number is \emph{new} at stage $s+1$,
if it is bigger than $s+1$ and none of its $X_s$-, $Y_s$-, or $Z_s$-equivalence classes contains any number so far used in the construction.

We \emph{initialize} a requirement $SF^X_{j,k}$ or $SF^Y_{j,k}$ by reverting it back to the beginning of its strategy, i.e., to the initial
distinction between Case 1 and Case 2 described below.
 	
\subsubsection*{Stage $0$} All requirements are initialized. Let $Z_0=\Id$.

\subsubsection*{Stage $s+1$}
Let $SF^X_{j,k}$ (or $SF^Y_{j,k}$, for which we act symmetrically, by just
replacing $X$ with $Y$ and the even numbers with the odd numbers) be the
highest-priority requirement which requires attention at stage $s+1$. Since
almost all requirements are initialized there is such a
requirement. Then we re-initialize all requirements of lower-priority, and
act as follows:

\medskip

-- The requirement requires attention because it is initialized. We
distinguish the following two cases:

\paragraph{\emph{Case $1$}}
The number $2k$ is free or $Y$-bound. If $2k$ is currently free, then we make
$2k$ \emph{$Y$-bound}. Wait for a stage $t>s+1$ where $\phi_j(k)$ converges
and $\phi_j(k)\cancel{\rel{X_t}}k$. We say that the strategy has entered the \emph{Case~1-waiting outcome}.

\paragraph{\emph{Case $2$}}
The number $2k$ is $X$-bound.

\paragraph{\quad \emph{Step $0$} of \emph{Case $2$}}
Let $k'$ be a new element. Let $n$ be greatest so that $f^{(n)}(2k)$ is
already defined. Define $f^{(m)}(2k')$ to be new elements for each $m\leq n$
ensuring that $f^{(m)}(2k')$ is in $X$ if and only if $f^{(m)}(2k)$ is in
$X$. Make $2k'$ \emph{$X$-bound}. Wait for $\phi_j(k')$ to converge. We say that the strategy has entered
the \emph{Case~2 Step~0-waiting outcome}.

\medskip
-- The requirement requires attention because it was in the Case~1-waiting
outcome or in the Case~2 Step~0-waiting outcome, and now the awaited
computations have converged. If it was in the Case~1-waiting outcome then we
will return to Step~1 of $\dmpar{x}$ (as described below) with $x:=k$. If it
was in the Case~2 Step~0-waiting outcome then it will return to Case $2$ Step
$1$.

\paragraph{\quad \emph{Step $1$} of \emph{Case $2$}}
If $\phi_j(k') \rel{X_s} k'$, then $X_{s+1}$-collapse $k'$ with $k$. Declare
the requirement \emph{satisfied}. Otherwise, make $2k'$ \emph{$Y$-bound} and
go to Step~1 of $\dmpar{x}$ (as described below) with $x:=k'$. 		

\medskip
-- If the requirement requires attention because it was in the $\dmpar{x}$
Step~1-waiting outcome, and now the awaited computation has converged, then
it will return to Step~2 of $\dmpar{x}$:

\medskip
-- If the requirement requires attention because it was in the $\dmpar{x}$
Step~$3(z)$-waiting outcome, and now the awaited computation has converged
then it will return to Step~$4(z,w)$ of $\dmpar{x}$.

\subsubsection*{$\emdmpar{x}$}
(Called at $s+1$  for a $Y$-bound $2x$, with $\phi_{j}(x)\downarrow$ and
$\phi_j(x)\cancel{\rel{X_s}} x$.)

\begin{itemize}
\item[] \quad \emph{Step~1}. We define $S$ be the set of $n$ so that
    $f^{(n)}(2x)$ is defined by stage $s$ and is in $X$. Notice that since
    $2x$ is $Y$-bound then we will not define any $f^{(m)}(2x)$ to be in
    $X$ at a later stage. If $\phi_j(x)\cancel{\rel{X_s}} \hat{f}^{(n)}(x)$
    for each $n\in S$, then we go to Step~$3(x)$. Otherwise, wait for
    $\phi_j(\hat{f}^{(n)}(x))$ to converge for every $n\in S$. We say the module has entered the
    \emph{$\emdiagmod$ Step~1-waiting outcome}.

\item[] \quad \emph{Step~2}. If the wait for the computations
    $\phi_j(\hat{f}^{(n)}(x))$ in Step~1  to converge is over then pick
    some $z=\hat{f}^{(n)}(x)$ for $n\in S$ so that $\phi_j(z)
    \cancel{\rel{X_s}} \hat{f}^{(m)}(z)$ for every $m$ such that
    $f^{(m)}(2z)$ is determined before stage $s$ and is in $X$. Then go to
    Step~$3(z)$. We will argue below (in the proof of
    Lemma~\ref{lem:main-satisfaction}) that either the requirement is
    satisfied (i.e., we see some $n$ so that $\phi_j(n)\rel{X} k$, or
    $\phi_j$ is not a reduction of $X$ to $X$) or that such a $z$ must
    exist.
Note that since $2z=f^{(n)}(2x)$, $2z$ is also $Y$-bound.
\item[] \quad \emph{Step~$3(z)$}. Take a new element $w$. For every $m$ for
    which $f^{(m)}(2z)$ is defined, define $f^{(m)}(2w)$ to be a new number
    so that $f^{(m)}(2w)$ is in $X$ if and only if $f^{(m)}(2z)$ is in $X$.
Make $2w$ be $Y$-bound (since $2z$ is $Y$-bound this preserves our ability to $X$-collapse $w$ with $z$).
Wait for $\phi_j(w)$ to converge. We say the module has entered the \emph{$\emdiagmod$ Step~$3(z)$-waiting outcome}.

  \item[] \quad \emph{Step~$4(z,w)$}. If the wait for the computation
      $\phi_j(w)$ in Step~$3(z)$ to converge  is over then we have
      $\phi_j(z)$ and $\phi_j(w)$ both converged by stage $s$. If
      $\phi_j(z) \rel{X_s} \phi_j(w)$, then we do nothing and declare the
      requirement \emph{satisfied}. Otherwise, we collapse $z \rel{X_{s+1}}
      w$ and declare the requirement \emph{satisfied}. 	
\end{itemize}

We choose the first number $v$ on which we have not defined $f(v)$. If $v$ is $X$-bound, we take a new number $x$ in $X$ and define $f(v)=x$.
Otherwise, we take a new number $y$ in $Y$ and define $f(v)=y$. If $v$ was
$X$-bound (or $Y$-bound), then we also make $f(v)$ be $X$-bound (or
$Y$-bound).

Let $Z^0_{s+1}$ be the equivalence relation generated by $Z_s$ along with any pairs that we have decided to $Z$-collapse during this stage.

Lastly, we do further collapses in order to ensure $f$ is a reduction. That is, we let $Z_{s+1}$ be the result of closing $Z^0_{s+1}$ under the implication: If $z \rel{Z_{s+1}} w$ and $f(z)$ and $f(w)$ are defined, then $f(z)\rel{Z_{s+1}}f(w)$. Note that since $f$ is only defined on finitely many values, this causes finitely much further collapse. We then stop the stage and go to stage $s+2$.

\subsection*{Verification:}

We will adopt the following notation throughout the verification: Given a
parameter $x$ chosen by an $SF$-strategy $R$, we will write $\widetilde{x}
=2x$ if $R=SF^X_{j,k}$ for some $j,k$ and $\widetilde{x}=2x+1$ if
$R=SF^Y_{j,k}$ for some $j,k$.

\begin{lemma}\label{finiteReinitialization}
Each requirement is re-initialized only finitely often.
\end{lemma}

\begin{proof}
Each requirement can act to re-initialize lower-priority requirements,
without being re-initialized itself, at most finitely often.
\end{proof}

In the following definition, an $SF$-strategy or a  $\diagmod$ module is
called \emph{active} for a parameter $x$ if the module has already chosen
this parameter and is waiting for a computation involving this parameter to
converge.

\begin{emdef}
A \emph{time} during the construction is either the beginning of a stage or immediately follows any $Z$-collapse or the assignment of any new parameter or unassignment (via initialization) of any parameters.

If $s$ is a stage, we abuse notation letting $s$ also represent the time that is the beginning of the stage $s+1$. Note that this agrees with our notation of letting $Z_s$ be the ceer we have constructed by the end of stage $s$, which is the same as the beginning of stage $s+1$.

At each time $t$ in the construction, let $D_t$ be the set of elements $\widetilde{w}$ which
refer to parameters $w$ for an active $\diagmod$ which is waiting in
Step~$3(z)$. 		

For each time $t$, let $E_t$ be the set of elements $\widetilde{k'}$ which
refer to parameters $k'$ for an active $SF$-strategy which is waiting in Case
$2$ Step~0. 	

For each time $t$, let $I_t$ be the set of elements which are in the image
of $f$ at time $t$.

For a time $t$, we let $Z_t$ be the ceer $Z$ as it appears at time $t$. That is, if $t$ is during stage $s+1$, $Z_t$ is the equivalence relation generated by $Z_s$ and any $Z$-collapse so far done during stage $s+1$. Note that if $s$ is a stage, this definition of $Z_s$ agrees with the previous definition.

We say that a number $x$ is \emph{new at time $t$} (a time during stage $s+1$) if $x>s+1$ and none of its $X_s$-, $Y_s$-, or $Z_s$-equivalence classes contain any number so far used in the construction.
\end{emdef}

\begin{observation}
At each time $t$, the sets $D_t$, $E_t$ and $I_t$ are disjoint.
\end{observation}
\begin{proof}
Each of the three types of parameters: $w$ for an active $\diagmod$ which is
in Step 3($z$), $k'$ for active $SF$-strategies in Case 2 Step 0, and
elements $f(n)$, are chosen to be new. Thus it is impossible that the same
number is in two of these sets.
\end{proof}

\begin{lemma}\label{unwantedCollapse}
For every time $t$, if $x,y\in D_t\cup E_t\cup I_t$ and $x \rel{Z_t} y$, then $x=f(n)$ and $y=f(m)$ and $n \rel{Z_{t}} m$ for some $n,m$ where $f(n)$ and $f(m)$ are defined at time $t$.

For any times $t\leq t'$ (i.e., $t$ occurs before $t'$ in the construction):
If $x$ is a number which is not new at time $t$ and $x$ is not
$Z_{t}$-equivalent to any member of $D_{t}\cup E_{t}\cup I_{t}$, then $x$ is not
$Z_{t'}$-equivalent to any member of $D_{t'}\cup E_{t'}\cup I_{t'}$.
\end{lemma}

\begin{proof}
We prove both claims by simultaneous induction on times $s$. We assume the claims hold for all times $t,t'\leq s$. Let $s'$ be the next time (i.e., exactly one action happens between $s$ and $s'$). We verify that they still hold where $t,t'$ are assumed to be times $\leq s'$. The claims clearly hold for $s$ being the beginning of the construction (i.e. $Z_s=\Id$ and no parameters have been chosen). Exactly one action takes place between times $s$ and $s'$. There are three possibilities for this action:
\begin{itemize}
	\item We may remove parameters via re-initialization or choose a new parameter which enters $D_{s'}\cup E_{s'}\cup I_{s'}$.
	\item We can cause $Z$-collapse to satisfy a requirement. This happens either between $\widetilde{k}$ and $\widetilde{k'}$ in Case 2 Step 1 of a $SF_{j,k}^X$- or $SF_{j,k}^Y$-strategy or between $\widetilde{w}$ and $\widetilde{z}$ in Step~$4(z,w)$ of a $\diagmod$.
	\item We may $Z$-collapse $f(z)$ with $f(w)$ at the end of the stage because we have already $Z$-collapsed $z$ with $w$.
\end{itemize}

We first consider removing or choosing new parameters. Removing parameters clearly maintain both claims since there are strictly fewer active numbers for the claims to hold for. Since all new parameters are chosen to be new numbers, their classes are singletons. Thus both claims are maintained by assigning new parameters.

We next consider $Z$-collapse between $\widetilde{k}$ and $\widetilde{k'}$ in Case 2 Step 1 of a $SF_{j,k}^X$- or $SF_{j,k}^Y$-strategy. By the first claim of the inductive hypothesis, before this collapse $\widetilde{k'}$ was the only member of its $Z_s$-class which was in $D_s\cup E_s\cup I_s$. Since we make this collapse and simultaneously make the requirement satisfied (i.e., inactive), $\widetilde{k'}$ is not in $D_{s'}\cup E_{s'}\cup I_{s'}$ after this action. Thus to the class of $\widetilde{k}$ we have added no member of $D_{s'}\cup E_{s'}\cup I_{s'}$. We also note that the collection of numbers which are now equivalent to active numbers is a subset of the numbers that were equivalent to active numbers before the collapse. Thus this action did not make either of the claims false.

Next we consider $Z$-collapse between $\widetilde{w}$ and $\widetilde{z}$ in Step~$4(z,w)$ of a $\diagmod$. Once again, $\widetilde{w}$ is no longer active after this collapse since the strategy is now satisfied and no longer active. By the first claim of the inductive hypothesis, before this collapse $\widetilde{w}$ was the only member of its $Z_s$-class which was in $D_s\cup E_s\cup I_s$. As in the previous case, this maintains both claims.

Finally, we consider what happens when we collapse $f(z)$ with $f(w)$ once we have already collapsed $z$ with $w$. Suppose this caused $Z_{s'}$-collapse between $x$ and $y$ for $x,y\in D_{s'}\cup E_{s'}\cup I_{s'}$. Note that this action does not change the values of these sets, so $x,y\in D_s\cup E_s\cup I_s$. By possibly re-naming $x$ and $y$, we may assume $x \rel{Z_s} f(z)$ and $y\rel{Z_s} f(w)$. By inductive hypothesis, this is only possible if $x=f(n)$ and $y=f(m)$ where $n \rel{Z_s} z$ and $m\rel{Z_s} w$. So we see that $n\rel{Z_{s'}}z\rel{Z_{s'}}w\rel{Z_{s'}}m$ and the first claim is preserved. Similarly the second claim is preserved since we did not change the $Z$-closure of the set $D_{s'}\cup E_{s'}\cup I_{s'}=D_s\cup E_s\cup I_s$.
\end{proof}

\begin{lemma}
If $x$ is even and $y$ is odd, then we never $Z$-collapse $x$ with $y$. That is,
$Z=X\oplus Y$.
\end{lemma}

\begin{proof}
We cause collapse in two cases: The first is $Z$-collapsing $\widetilde{k}'$
with $\widetilde{k}$ in Case 2 Step~1. In this case, since we defined $f$ on
$\widetilde{k'}$ so that $f^{(m)}(\widetilde{k'})$ is in $X$ if and only if
$f^{(m)}(\widetilde{k})$ is in $X$, and both $\widetilde{k}$ and
$\widetilde{k'}$ were $X$-bound, or both $Y$-bound,
when we $Z$-collapse $\widetilde{k}$ with
$\widetilde{k'}$ (even after closing under making $f$ a reduction) we do not cause any $Z$-collapses between even and odd numbers.

The second case is $Z$-collapsing $\widetilde{z}$ with $\widetilde{w}$ in
Step~$4(z,w)$. In this case, we have made $\widetilde{z}$ $Z$-collapse with
$\widetilde{w}$ where we have ensured that $f^{(m)}(\widetilde{w})$ is in $X$
if and only if $f^{(m)}(\widetilde{z})$ is in $X$ and then we made both
$\widetilde{z}$ and $\widetilde{w}$ be $X$-bound or both be $Y$-bound. Thus
we see that we do not cause any $Z$-collapse between even and odd numbers.
\end{proof}

\begin{lemma}
$f$ is a reduction of $X\oplus Y$ to $X\oplus Y$ and $[0]_{X\oplus Y}$ is not
in the range of $f$.
\end{lemma}

\begin{proof}
Since whenever we have $z Z_s w$, we also cause $f(z) Z_s f(w)$, it remains
to see that if $f(z)Z_s f(w)$, then we also have $z Z_s w$. This follows from the first claim of
Lemma \ref{unwantedCollapse}.

By the second claim in Lemma \ref{unwantedCollapse} and the fact that 0 is not new even at stage $0$, $0$ never becomes $Z$-equivalent to a number in the image of $f$.
\end{proof}

\begin{lemma}\label{unwantedCollapse2}
	Suppose $x$ and $y$ are mentioned by a strategy $R$ at stage $s$ and
	$x \cancel{\rel{Z_s}} y$. Suppose that $t>s$ is a stage so
	that the strategy $R$ has not been re-initialized or acted at any stage
	between $s$ and $t$. Then $x \cancel{\rel{Z_t}} y$.
\end{lemma}

\begin{proof}
	It suffices to show that no lower-priority requirement can cause a collapse which will make $x$ $Z$-collapse with $y$. We begin by showing that any number which is not new at stage $s$ (such as $x$ or $y$) does not become $Z_{t'}$-equivalent to parameters $\widetilde{k'}$ for a strategy of lower-priority
	than $R$ in Case 2 Step~0 or parameters $\widetilde{w}$ for a
	$\diagmod$ for a strategy of lower-priority
	than $R$ which is waiting in Step~$3(z)$ for any time $s<t'\leq t$. Similarly, if $a$ is not new at stage $s$ and is not $Z_s$-equivalent to a member of $I_s$, then $a$ is not $Z_{t'}$-equivalent to a member of $I_{t'}$ for any time $s<t'\leq t$.
	
	Let $t'$ be a time so that $s<t'\leq t$, and let $a$ be not new at stage $s$. At stage $s$, all lower-priority requirements are re-initialized. Thus either
	$a$ is not $Z_s$-equivalent to a member of $D_s\cup E_s\cup I_s$, so the second claim of
	Lemma \ref{unwantedCollapse} ensures that it is  not $Z_{t'}$-equivalent to a
	member of $D_{t'}\cup E_{t'}\cup I_{t'}$, or it is
	$Z_s$-equivalent to a member of $D_s\cup E_s\cup I_s$.
	
	 If $a$ is $Z_s$-equivalent to
	a member
	$u\in D_s\cup E_s$, then $u$ is a parameter for a
	requirement of priority at least as high as $R$, which does not act or get
	reinitialized before stage $t$, so $u\in D_{t'}\cup E_{t'}$. Similarly,
	if $u\in I_s$, then $u\in I_{t'}$, since $I_s\subseteq
	I_{t'}$. The first claim of Lemma~\ref{unwantedCollapse} shows that
	$u$ is not $Z_{t'}$-equivalent to parameters $\widetilde{k'}$ for a lower-priority
	requirement in Case 2 Step~0, or parameters $\widetilde{w}$ for a lower-priority
	$\diagmod$ which is waiting in Step~$3(z)$. And if $u\in D_s\cup E_s$, the first claim of Lemma~\ref{unwantedCollapse} also shows that $u$ is not $Z_{t'}$-equivalent to a member of $I_{t'}$.
	We conclude that any number $a$ which is not new at stage $s$ is never equivalent to a parameter $\widetilde{k'}$ for a lower-priority requirement in Case 2 Step~0 or
	parameters $\widetilde{w}$ for a lower-priority $\diagmod$ which is waiting in
	Step~$3(z)$ at any $t'$ between $s$ and $t$. Further, if $a$ is not $Z_s$-equivalent to a member of $I_s$, then $a$ is not $Z_{t'}$-equivalent to a member of $I_{t'}$.	
	
	We define a unique finite sequence of classes $[a_0]_{Z_s},\ldots, [a_n]_{Z_s}$ as follows. We let $a_0=x$. Having defined a class $[a_i]_{Z_s}$, we let the class $[a_{i+1}]_{Z_s}$ be the (unique by the first claim in Lemma \ref{unwantedCollapse}) class of a number $u$ so that $f(u)\in [a_i]_{Z_s}$. If no such class exists, we simply end the finite sequence. We similarly define a sequence of classes $[b_i]_{Z_s}$ beginning with $b_0=y$.
	
	\begin{claim}
		Let $t'$ be a time so $s<t'\leq t$. Suppose that $c$ is a number so that $f^{(k)}(c)$ is defined at time $t'$ and is $Z_{t'}$-equivalent to $x$. Then $c\rel{Z_t} a_i$ for some $i$. Similarly for $y$ and the $b_j$.
	\end{claim}
	\begin{proof}
		We distinguish between two cases. Case 1: $k> n$. Then by the first claim of Lemma \ref{unwantedCollapse} applied $n$ times, $f^{k-n}(c) \rel{Z_{t'}} a_n$. But $a_n$ is not new at stage $s$ and is not $Z_s$-equivalent to a member of $I_s$, so it is not $Z_{t'}$-equivalent to a member of $I_{t'}$. This is a contradiction. Case 2: $k\leq n$. Then applying the first claim of Lemma \ref{unwantedCollapse} $k$ times, we see that $c\rel{Z_{t'}}a_k$.
	\end{proof}
	
	Since no member of $[a_i]_{Z_s}$ is new at stage $s$ for any $i\leq n$, we know that they cannot be $Z_{t'}$-equivalent to a parameter $\widetilde{k'}$ for a lower-priority requirement in Case 2 Step~0 or
	parameters $\widetilde{w}$ for a lower-priority $\diagmod$ which is waiting in
	Step~$3(z)$ at any $t'$ between $s$ and $t$. Similarly for the $[b_j]_{Z_s}$. This along with the previous claim shows that whenever a lower-priority requirement acts, it cannot collapse the classes of $a_i$'s with $b_j$'s. In particular, even after we ensure that $f$ is a reduction (i.e., we collapse $f(z)$ with $f(w)$ if we have already collapsed $z$ with $w$), we will not collapse $x$ with $y$.
\end{proof}
	
\begin{lemma}\label{lem:main-satisfaction}
Each requirement is satisfied, so both $X$ and $Y$ are self-full.
\end{lemma}

\begin{proof}
Let us consider a requirement $SF^X_{j,k}$. By Lemma
\ref{finiteReinitialization}, we can fix a stage $s$ to be the last time this
strategy will ever be re-initialized. After stage $s$, when it next chooses
parameters $k'$, $z$, and $w$, these choices are permanent. If we are in Case
1, then we either have $\phi_j(k)$ diverges, or it converges and $\phi_j(k)
\rel{X} k$, or we go to $\dmpar{k}$. In the first two cases, we have
satisfied the requirement because either $\phi_j$ is not total, or $[k]_X$
intersects the image of $\phi_j$. We consider the last case below. Similarly,
if we are in Case 2, then we either have $\phi_j(k')$ diverge or $\phi_j(k')
\rel{X_s} k'$ at the next stage $s$ when the requirement acts, in which case
we cause $k' \rel{X} k$, or we enter $\dmpar{k'}$. In the first two cases the
requirement is satisfied because again either $\phi_j$ is not total or
$[k]_X$ intersects the image of $\phi_j$.

So, we can suppose that a $\diagmod$ is begun. If we wait in Step~1 forever,
this guarantees that $\phi_j$ is not total and the requirement is satisfied.
Otherwise at some stage $s+1$ we begin Step~2. We claim that in Step~2,
either the requirement is satisfied or we must succeed in picking some $z$.
Consider the finite set of $X$-classes $F=\{[\hat{f}^{(n)}(x)]_X\mid n\in
S\}$. If $\phi_j$ is a reduction of $X$ to $X$, then it must be injective on
the classes in $F$. So, it either sends some class in $F$ to a class outside
of $F$ or it is surjective on $F$. In the latter case, $[x]_X=
[\hat{f}^{(0)}(x)]_X$ intersects the range of $\phi_j$ and the requirement is
satisfied. In the former case, there is some $n\in S$ so that
$\phi_j(\hat{f}^{(n)}(x))\cancel{X} \hat{f}^{(m)}(x)$ for every $m$. Thus we
will find the needed $z=\hat{f}^{(n)}(x)$. Thus we begin Step~3. If we are
stuck in Step~3, then $\phi_j$ is not total. If we get to step~$4(z,w)$ at
say stage $s'+1$, we have two cases to consider: If $\phi_j(z) \rel{X_{s'}}
\phi_j(w)$, then we do not
$X$-collapse $z$ with $w$. This guarantees that $z\cancel{\rel{X_t}}w$ for
all $t>s'$ by Lemma \ref{unwantedCollapse2}, therefore $\phi_j$ is not a
reduction. On the other hand, if we have $\phi_j(z)\cancel{\rel{X_{s'}}}
\phi_j(w)$, then we
$X$-collapse $z
\rel{X_{s'+1}} w$, which does not cause $\phi_j(z)\rel{X_{s'+1}}\phi_j(w)$. By
Lemma \ref{unwantedCollapse2}, we have $\phi_j(z)\cancel{\rel{X}}\phi_j(w)$,
and $\phi_j$ is not a reduction.
\end{proof}
The proof is now complete.
\end{proof}

\section{The \HSF ceers}
We introduce the \emph{hereditarily self-full ceers} and show that they properly
contain the dark ceers.

\begin{emdef}
A ceer $X$ is \emph{hereditarily self-full} if whenever $Y$ is self-full,
then $X\oplus Y$ is self-full.
\end{emdef}

Notice that by~\cite[Corollary~4.3]{joinmeet} every finite ceer is \HSF.

The next definition highlights a property, which, when accompanied by
self-fullness, is sufficient to guarantee hereditary self-fullness as proved
in Theorem~\ref{SFCCRsAreHSF} below.

\begin{emdef}
We say that a ceer $X$ is \emph{co-ceer-resistant} if whenever $C$ is a
$\Pi^0_1$-equivalence relation with infinitely many classes, then there are
$x,y$ so that $xXy$ and $x\cancel{\rel{C}}y$.
\end{emdef}

\begin{theorem}\label{SFCCRsAreHSF}
Every self-full \CCR ceer is \HSF.
\end{theorem}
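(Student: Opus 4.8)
The plan is to argue by contradiction. Suppose $X$ is self-full and \CCR, $Y$ is self-full, but $X\oplus Y$ is not self-full; fix a reduction $h\colon X\oplus Y\to X\oplus Y$ whose range omits an equivalence class. Every reduction is injective on classes, so $h$ sends the classes of $X\oplus Y$ bijectively onto a proper sub-collection of them. Since ``$h(n)$ is even'' is a decidable, union-of-classes condition, I would first split the copy of $X$ in $X\oplus Y$ into the set $U$ of $X$-classes that $h$ keeps even and the set $V$ of $X$-classes that $h$ sends odd, and symmetrically split the copy of $Y$ into $U'$ (kept odd) and $V'$ (sent even); halving images where appropriate gives reductions $\hat h_U\colon X\restriction U\to X$, $g_V\colon X\restriction V\to Y$, $\hat h_{U'}\colon Y\restriction U'\to Y$ and $g_{V'}\colon Y\restriction V'\to X$.

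Let $P$ (resp.\ $Q$) be the c.e.\ saturation, as a union of $X$-classes (resp.\ $Y$-classes), of the even (resp.\ odd) part of $\range(h)$. Using that $h$ is surjective onto its saturated range, one checks that $X\oplus Y\equiv(X\restriction P)\oplus(Y\restriction Q)$, and, separating the preimages of the evens and of the odds,
\[
(X\restriction U)\oplus(Y\restriction V')\ \equiv\ X\restriction P\qquad\text{and}\qquad (X\restriction V)\oplus(Y\restriction U')\ \equiv\ Y\restriction Q.
\]
Moreover the omitted class witnesses that either $P$ is a proper union of $X$-classes or $Q$ is a proper union of $Y$-classes, whence, by self-fullness, $X\restriction P< X$ or $Y\restriction Q< Y$ in the respective case.

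I would then split into cases according to how many classes ``cross sides''. If $V=V'=\varnothing$, then $\hat h_U\colon X\to X$ and $\hat h_{U'}\colon Y\to Y$ are honest reductions, and the one on the side carrying the omitted class omits a class, contradicting self-fullness of $X$ or of $Y$ (equivalently, exhibiting $X\oplus\Id_1\leq X$ or $Y\oplus\Id_1\leq Y$). If only finitely many classes cross, a small variant of this argument (absorbing the finitely many exceptions, using \cite[Corollary~4.3]{joinmeet}) again gives a contradiction. The substantial case is that infinitely many classes cross; here I would aim to show that the equivalences above, pushed far enough, force one of three outcomes: a reduction of $X$ onto a proper union of its own classes (impossible as $X$ is self-full), a reduction of $Y$ onto a proper union of its own classes (impossible as $Y$ is self-full), or a $\Pi^0_1$ equivalence relation with infinitely many classes that coarsens $X$ (impossible as $X$ is \CCR --- such a relation has an infinite c.e.\ transversal, which would then also be a transversal of $X$). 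The third outcome is the one that arises when infinitely many $Y$-classes are mapped into the $X$-side: the $X$-classes not hit by $g_{V'}$ form a $\Pi^0_1$, union-of-classes set, and collapsing appropriately manufactures the coarsening. In every case we reach a contradiction, so $X\oplus Y$ is self-full.

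The step I expect to be the real obstacle is producing that $\Pi^0_1$ coarsening: everything one obtains by pulling the $Y$-side structure back through $g_{V'}$ is computably enumerable rather than $\Pi^0_1$, so the coarsening cannot retain a c.e.\ copy of $X$ on any infinite set of classes and must instead be built purely from the complements of the c.e.\ data produced by $h$; one then has to verify both that it does coarsen $X$ and that it genuinely has infinitely many classes.
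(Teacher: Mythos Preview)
Your proposal has a real gap exactly where you flag it. The object you point to---``the $X$-classes not hit by $g_{V'}$''---is a $\Pi^0_1$ \emph{set}, not an equivalence relation, and collapsing its complement to a single class yields a coarsening of $X$ with at most two classes, not infinitely many. More generally, nothing built from a single application of $h$ will hand you a $\Pi^0_1$ coarsening of $X$ with infinitely many classes: the c.e.\ data you have (the images of $g_V$, $g_{V'}$, $\hat h_U$, $\hat h_{U'}$) produce $\Pi^0_1$ \emph{sets} by complementation, but equivalence relations with many classes require more structure than that. Your case analysis is also incomplete---you do not say what happens when both $V$ and $V'$ are infinite---and the passage from ``infinitely many classes cross'' to one of your three listed outcomes is asserted rather than argued.

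The paper's proof supplies precisely the missing $\Pi^0_1$ object, and it does so by iterating. One defines $n\sim^f m$ iff $f^{(k)}(2n)$ and $f^{(k)}(2m)$ have the same parity for \emph{every} $k$; this is manifestly $\Pi^0_1$ and coarsens $X$, so co-ceer resistance forces $\sim^f$ to have only finitely many classes. Each $n$ therefore determines an eventually periodic parity sequence in $\{X,Y\}^\omega$, and by replacing $f$ with a suitable power $f_2$ one reduces to the situation where every parity sequence is either $X^\infty$ or $X\concat Y^\infty$---in your language, each $X$-class either stays in $X$ forever or crosses to $Y$ at the first step and stays there. With only these two patterns present, one can write down explicit self-reductions of $X$ and of $Y$ (piecing $f_2$ together with the identity on one side, and $f_2$ with $f_2^{(2)}$ on the other) and invoke self-fullness of each to conclude that $f_2$, hence $f$, is surjective on classes. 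The idea you are missing is to track parities along the entire $f$-orbit rather than just one step; that is what converts the problem into one where your ``finitely many cross'' intuition actually applies.
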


\begin{proof}
Given any reduction $f: X\oplus Y\rightarrow X\oplus Y$, define $n
\rel{\sim^f} m$ if for every $k\in \omega$, $f^{(k)}(2n)$ is even if and only
if $f^{(k)}(2m)$ is even. Note that $\sim^f$ is a $\Pi^0_1$-equivalence
relation and $X$ is a refinement of $\sim^f$ (i.e.\ if $n \rel{X} m$ then $n
\rel{\sim^f} m$). We represent a $\sim^f$-class by a sequence in
$\{X,Y\}^{\omega}$. Namely, we define $\tau^f_n \in \{X,Y\}^{\omega}$ by
$\tau^f_n(k)=X$ if $f^{(k)}(2n)$ is even (or $f^{(k)}(2n) \in X$, as we will
sometimes write), and $\tau^f_n(k)=Y$ if $f^{(k)}(2n)$ is odd (or
$f^{(k)}(2n) \in Y$, as we will sometimes write). It is immediate to see that
$n \rel{\sim^f} m$ if and only if $\tau^f_n=\tau^f_m$, thus in fact we can
identify the $\sim^f$-equivalence class of $n$ with $\tau^f_n$.

A sequence $\tau \in \{X,Y\}^{\omega}$ is \emph{eventually periodic} if there
exist finite sequences $\rho, \sigma \in \{X,Y\}^{<\omega}$ so that
$\tau=\rho \concat\sigma^\infty$, where the symbol $\concat$ denotes
concatenation, and $\sigma^\infty=\sigma \concat \sigma \concat \cdots
\concat \sigma \cdots$ is the infinite string obtained by concatenating
infinitely many times $\sigma$ with itself. Such a string $\sigma$ is a
\emph{period} of $\tau$.

We say that a $\sim^f$-class is \emph{periodic of period $k$} if the infinite
string $\tau$ which represents $\sim^f$ is periodic with a period of length
$k$.

Let now $X$ be a self-full \CCR ceer and suppose $Y$ is self-full. We must
show that $X\oplus Y$ is self-full. Suppose that $f:X\oplus Y\rightarrow
X\oplus Y$ is a reduction. We must show that the range of $f$ intersects all
the equivalence classes of $X\oplus Y$. Since $X \subseteq \sim^f$ and $X$ is
\CCR, it follows that $\sim^f$ has only finitely many classes.

\begin{lemma}
Each of the finitely many $\sim^{f}$-classes is represented by an eventually
periodic sequence $\tau\in \{X,Y\}^{\omega}$.
\end{lemma}

\begin{proof}
Let $\tau$ be a sequence which represents a $\sim^f$-class, say $\tau=
\tau^f_n$. If $\tau=\rho \concat Y^\infty$ for some finite $\rho$ (where, to
be more precise, $Y^\infty= \langle Y\rangle ^\infty$, where $\langle Y
\rangle$ denotes the string of length one consisting of the sole bit $Y$)
then $\tau$ is eventually periodic. Otherwise, let $\{k_i\}_{i \in \omega}$
be the sequence in ascending order so that $\tau(k_i)=X$ (notice that
$k_0=0$). For every $i$, let ${\tau}_{\restriction {\ge k_i}}$ be the
\emph{tail} of $\tau$ beginning from the bit $k_i$ of $\tau$, i.e.
${\tau}_{\restriction {\ge k_i}}(k) = \tau(k_i+k)$. Clearly for every $i$
there exists $n_i$ so that ${\tau}_{\restriction {\ge k_i}}=\tau^f_{n_i}$:
just take $n_i=\frac{f^{(k_i)}(2n)}{2}$. If it were ${\tau}_{\restriction
{\ge k_i}}\ne {\tau}_{\restriction {\ge k_j}}$ for every $i$, and $j<i$, then
there would be infinitely many $\sim^f$-equivalence classes, contrary to our
previous conclusion. Therefore there exist a least $j$, and for this $j$ a
least $i>j$, so that ${\tau}_{\restriction {\ge k_i}}= {\tau}_{\restriction
{\ge k_j}}$. But then we see that there is a period, namely the finite string
$\sigma$ such that ${\tau}_{\restriction \ge k_j}= \sigma
\concat{\tau}_{\restriction \ge k_i}$.
\end{proof}

Let $N_1$ be a common multiple of all periods of $\sim^f$-classes. We replace
$f$ by $f_1=f^{(N_1)}$, which is still a reduction of $X\oplus Y$ to $X\oplus
Y$,  and now all $\sim^{f_1}$-classes have period $1$. It follows that all
the $\tau$'s which represent $\sim^{f_1}$-classes  are of the form $\rho
X^{\infty}$ or $\rho Y^{\infty}$, for some $\rho$. Let $N_2$ be a number
greater than the length of $\rho$ for each $\sim^{f_1}$-class. Once again, we
replace $f_1$ by $f_2=f_1^{(N_2)}$, which ensures that each
$\sim^{f_2}$-class is either represented by $X^\infty$ or $XY^\infty$. Thus
we have at most two $\sim^{f_2}$-classes. Let $C_X^{f_2}=\{2n:
\tau^{f_2}_n=X^\infty\}$ and $C^{f_2}_Y=\{2n: \tau^{f_2}_n=X\concat
Y^\infty\}$.

\begin{lemma}\label{Cx}
The range of $f_2\restriction C_X^{f_2}$ intersects all the $X\oplus
Y$-classes of $C_X^{f_2}$, and there are no odd numbers in
$f_2^{-1}(C_X^{f_2})$.
\end{lemma}

\begin{proof}
First of all, notice that $C_X^{f_2}, C_Y^{f_2}$ partition $2\omega$, they
are both $X\oplus Y$-closed, and they are decidable, being two $\Pi^0_1$-sets
that partition a decidable set. Therefore we can define a reduction $g: X
\rightarrow X$ by:
\[
g(n)=
\begin{cases}
\dfrac{f_2(2n)}{2}, &\textrm{if $2n \in C_X^{f_2}$},\\
n, &\textrm{if $2n \in C_Y^{f_{2}}$}.
\end{cases}
\]
As $X$ is self-full it follows that the range of $g$ must intersect all the
classes of $X$. If it were true that $g(m)=n$ for some $m,n$ such that $2m
\in C_Y^{f_2}$ and $2n \in C_X^{f_2}$, then we would have $m=n$ by definition
of $g$, which contradicts the fact that $C_X^{f_2}$ and $C_Y^{f_2}$ are
disjoint. Therefore it must be that the range of $g$ restricted to $\{n: 2n
\in C_X^{f_2}\}$ intersects all the $X$-classes of $\{n: 2n \in C_X^{f_2}\}$,
and thus the range of $f_2\restriction C_X^{f_2}$ intersects all the $X\oplus
Y$-classes of $C_X^{f_2}$. Thus there cannot be any odd $2m+1$ such that $f_2
(2m+1)\in C_X^{f_2}$. Otherwise let $f_2 (2m+1)=2n \in C_X^{f_2}$, and let
$2n'\in C_X^{f_2}$ be such that $f_2(2n') \rel{X\oplus Y} 2n$; as $f_2 (2m+1)
\rel{X\oplus Y} f_2(2n')$, it would follow $2m+1 \rel{X\oplus Y} 2n'$, a
contradiction.
\end{proof}

\begin{lemma}\label{ontoY}
For every $m$, either there is an odd $y$ so that $2m+1 \rel{X\oplus Y}
f_2(y)$ or there is an odd $y$ so that $2m+1 \rel{X\oplus Y} f_2^{(2)}(y)$.
\end{lemma}
\begin{proof}
Define the function
\[
g(n)=
\begin{cases}
\dfrac{f_2(2n+1)-1}{2}, &\textrm{if $f_2(2n+1)$ is odd},\\
\dfrac{f_2^{(2)}(2n+1)-1}{2}, &\textrm{otherwise}.
\end{cases}
\]
We first observe that $g(n)$ is always an integer. If $f_2(2n+1)$ is even,
then $f_2(2n+1)$ must be in $C^{f_2}_Y$, since $f_2^{-1}(C_X^{f_2})$ cannot
contain $2n+1$ by the previous Lemma. Thus
$f_2^{(2)}(2n+1)$ is odd. Suppose that $n\rel{Y}m$. Since $f_2$ is a
reduction of $X\oplus Y$ to $X\oplus Y$, we see that $f_2(2n+1)\rel{X\oplus
Y}f_2(2m+1)$, thus they have the same parity. Thus either
$g(n)=\frac{f_2(2n+1)-1}{2}\rel{Y} \frac{f_2(2m+1)-1}{2}=g(m)$ or
$g(n)=\frac{f_2^{(2)}(2n+1)-1}{2}\rel{Y} \frac{f_2^{(2)}(2m+1)-1}{2}=g(m)$.
Now, suppose that $g(n)\rel{Y}g(m)$. If $f_2(2n+1)$ has the same parity as
$f_2(2m+1)$, then we can use the fact that $f_2$ is a reduction of $X\oplus
Y$ to $X\oplus Y$ to conclude that $n\rel{Y} m$. If they have different
parities, then we have without loss of generality: $g(n)=
\frac{f_2(2n+1)-1}{2} \rel{Y}\frac{f_2^{(2)}(2m+1)-1}{2}=g(m)$. But then we
have $f_2(2n+1)\rel{X\oplus Y} f_2^{(2)}(2m+1)$. But since $f_2$ is a
reduction of $X\oplus Y$ to $X\oplus Y$, we have $2n+1\rel{X\oplus
Y}f_2(2m+1)$, but the latter is even. This is a contradiction. Thus we have
shown that $g$ is a reduction of $Y$ to $Y$. Since $Y$ is self-full, we
conclude that every class of $Y$ intersects the range of $g$. That is, $m
\rel{Y} g(k)$ for some $k$. If $f_2(2k+1)$ is odd, then $2m+1\rel{X\oplus Y}
f_2(2k+1)$. If $f_2(2k+1)$ is even, then $2m+1\rel{X\oplus Y}
f_2^{(2)}(2k+1)$.
\end{proof}

\begin{lemma}
For every $k$, $[k]_{X\oplus Y}$ intersects the range of $f_2$.
\end{lemma}
\begin{proof}
If $k=2m+1$ then Lemma~\ref{ontoY} ensures that $[k]_{X\oplus Y}$ intersects the
range of $f_2$.

Consider now $k=2m$.  If $2m\in C_X^{f_2}$ then we have shown in Lemma
\ref{Cx} that $[k]_{X\oplus Y}$ intersects the range of $f_{2}$. So we
suppose that $f_2(2m)=2n+1$. But we have shown in  Lemma~\ref{ontoY} that
$2n+1$ is $X\oplus Y$-equivalent to $f_2(y)$ for an odd $y$ or $f_2^{(2)}(y)$
for an odd $y$. In the former case, we have $f_2(2m)\rel{X\oplus Y} f_2(y)$,
which would imply that $2m\rel{X\oplus Y} y$ which is impossible since $y$ is
odd. Thus we have $f_2(2m)\rel{X\oplus Y} f_2^{(2)}(y)$, thus $2m\rel{X\oplus
Y} f_2(y)$.
\end{proof}

This shows that the range of $f_2$ intersects every $X\oplus Y$-class, so the range of $f$ must also intersect every $X\oplus Y$-class.
Thus $X\oplus Y$ is self-full.
\end{proof}

\begin{lemma}\label{darksAreCCR}
Every dark ceer $X$ is \CCR.
\end{lemma}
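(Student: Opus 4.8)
The plan is to argue by contraposition: if $X$ is dark but fails to be \CCR, I will extract from the witnessing $\Pi^0_1$ equivalence relation an infinite c.e.\ transversal for $X$, contradicting darkness. Concretely, suppose $C$ is a $\Pi^0_1$ equivalence relation with infinitely many classes and $X$ refines $C$ (so $X \subseteq C$ as sets of pairs). Since $C$ being $\Pi^0_1$ means its complement $\overline{C}$ is c.e., fix a computable enumeration $\overline{C} = \bigcup_s \overline{C}_s$.

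The heart of the argument is the observation that a $\Pi^0_1$ equivalence relation with infinitely many classes always possesses an infinite c.e.\ transversal, which I build for $C$ in stages. Put $W_0 = \{0\}$. Given the finite set $W_s$, search --- dovetailing over candidate numbers $b$ and over enumeration stages $t \ge s$ --- for a $b$ such that $(a,b) \in \overline{C}_t$ for every $a \in W_s$, and set $W_{s+1} = W_s \cup \{b\}$ for the first such $b$ located; let $W = \bigcup_s W_s$. This search always succeeds: $W_s$ is finite while $C$ has infinitely many classes, so some $b$ lies in a $C$-class meeting no element of $W_s$, and for that $b$ each pair $(a,b)$ with $a \in W_s$ eventually enters $\overline{C}$. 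Hence $W$ is c.e.\ and infinite (exactly one new element is added per stage), and any two distinct members of $W$ are $C$-inequivalent, since the inequivalences confirmed when an element is enumerated into $W$ are permanent facts.

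Finally, because $X \subseteq C$, any two distinct elements of $W$ are also $X$-inequivalent, so $W$ is an infinite c.e.\ transversal for $X$; enumerating $W$ injectively yields a computable function witnessing $\Id \le X$, i.e.\ $X$ is light, contradicting the hypothesis that $X$ is dark. Therefore every dark ceer is \CCR. The one place demanding care is the construction of $W$: the search at each stage must dovetail over both the candidate element and the approximation stage of $\overline{C}$, and its termination is precisely where the hypothesis that $C$ has infinitely many classes is used; the remaining steps are routine bookkeeping.
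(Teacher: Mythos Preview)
Your proof is correct and follows essentially the same approach as the paper: both arguments hinge on the fact that any $\Pi^0_1$ equivalence relation with infinitely many classes admits an infinite c.e.\ transversal (equivalently, $\Id \leq C$), obtained by the same greedy search through the c.e.\ complement $\overline{C}$, and then observe that the failure of co-ceer resistance amounts to $X \subseteq C$, so this transversal is also a transversal for $X$, contradicting darkness. The paper packages the construction as a reduction $f\colon \Id \to C$ rather than a c.e.\ set $W$, but this is only a cosmetic difference.
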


\begin{proof}
Suppose $C$ is a $\Pi^0_1$-equivalence relation and has infinitely many
classes. Then $\Id\leq C$, because $C$ is $\Pi^0_1$. Indeed, if $C=W_{e}^{c}$
and $W_{e}$ is the $e$-th c.e. set, then a reduction $f: \Id \rightarrow C$
can be defined as follows: to define $f(n)$ search for the least $\langle x,
s\rangle$ so that $\{\langle f(i), x \rangle: i<n\} \subseteq W_{e,s}$ (such
a number exists since $C$ has infinitely many classes) and let $f(n)=x$.
It follows that if $X$ is a ceer such that $\Id\not\leq X$, then there are $n,m$
so that $nXm$ and $n\cancel{\rel{C}}m$.
\end{proof}

\begin{corollary}\label{cor:hereditarily}
Every dark ceer is \HSF.
\end{corollary}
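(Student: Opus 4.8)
The plan is to read the corollary off directly from the two immediately preceding results, together with a fact recalled in the introduction. First I would note that, as recorded in Section~\ref{sct:introduction} (the inclusion $\I \cup \dark \subseteq \SF$ established in \cite{joinmeet}), every dark ceer is self-full. Second, Lemma~\ref{darksAreCCR} says that every dark ceer is \CCR. Combining these, a dark ceer $X$ is a self-full \CCR ceer, so Theorem~\ref{SFCCRsAreHSF} applies and yields that $X$ is \HSF.

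There is essentially no obstacle to overcome here: all of the substantive work has already been done in proving Theorem~\ref{SFCCRsAreHSF} (the analysis of the $\Pi^0_1$-equivalence relation $\sim^f$, its eventual periodicity, the passage to $f_2$, and the auxiliary reductions $g$ of $X$ to $X$ and $Y$ to $Y$) and in proving Lemma~\ref{darksAreCCR} (the observation that a $\Pi^0_1$-equivalence relation with infinitely many classes lies above $\Id$, so a ceer $X$ with $\Id \nleq X$ must fail to refine it). Given those, the corollary is immediate.

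It is worth remarking that, combined with the fact that every finite ceer is \HSF (by \cite[Corollary~4.3]{joinmeet}), this yields the inclusion $\dark \cup \I \subseteq \textrm{HSF}$ announced in the introduction. The strictness of this inclusion, and hence the strict chain $\dark \cup \I \subset \textrm{HSF} \subset \SF$, is not established here but is obtained later via the construction of a light \HSF ceer (Theorem~\ref{thm:light-hsf}), since any self-full ceer bounding a light ceer is itself light and therefore not dark.
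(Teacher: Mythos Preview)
Your proof is correct and matches the paper's own argument exactly: dark ceers are self-full (from \cite{joinmeet}) and \CCR (Lemma~\ref{darksAreCCR}), so Theorem~\ref{SFCCRsAreHSF} applies. Your additional commentary about the inclusion $\dark \cup \I \subseteq \textrm{HSF}$ and its strictness is also accurate, though note that the result you cite as Theorem~\ref{thm:light-hsf} is actually stated as a Corollary in the paper.
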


\begin{proof}
Every dark ceer is self-full by \cite{joinmeet} and is \CCR by Lemma
\ref{darksAreCCR}. By Theorem \ref{SFCCRsAreHSF}, dark ceers are \HSF.
\end{proof}

The next theorem shows that every non-universal degree in $\Ceers_{/\I}$ have infinitely many incomparable \HSF strong minimal covers. This is analogous to \cite[4.10]{joinmeet}, which proves the same result for the class of self-full ceers. Throughout the theorem and its proof, we employ the following notation. For every $k \ge 1$ we let $\Id_k$ denote a fixed ceer with exactly $k$ classes. Note that the exact choice does not matter since any two ceers with the same finite number of classes are equivalent. For uniformity of notation, for any ceer $A$, we allow $A\oplus \Id_0$ to represent $A$.

\begin{theorem}\label{thm:HSF-Istrongminimalcovers}
	Let $A$ be any non-universal ceer. Then there are infinitely many incomparable \HSF ceers $(E_l)_{l\in\omega}$ so that for every $n,l\in \omega$ and ceer $X$, $A\oplus \Id_n\leq E_l$ and
\[
X<E_l \Rightarrow (\exists k)[X\leq A\oplus \Id_k].
\]
\end{theorem}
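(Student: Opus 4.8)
The plan is to build the ceers $E_l$ by a direct finite-injury construction, starting from the target lower bound $A$ and forcing every reduction $E_l \to E_l$ to be surjective, while keeping $E_l$ a strong minimal cover of the $\I$-degree of $A$. Concretely, $E_l$ will be obtained by taking $A \oplus \Id_\omega$ (a copy of $A$ together with infinitely many isolated singleton points) and then $R$-collapsing singletons together, via the collapsing technique from the end of Section~\ref{sct:introduction}, in a controlled way. The singleton points serve two purposes: they give us the room to diagonalize against potential reductions $\phi_j : E_l \to E_l$ that might fail self-fullness, and, because we only ever collapse singletons with each other (never with the $A$-part and never across different $E_l$'s indices), the $A$-part is preserved as an $E_l$-closed computable-in-$A$ substructure, which is what forces $A \oplus \Id_n \le E_l$ and, conversely, $X < E_l \Rightarrow X \le A \oplus \Id_k$.

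First I would set up the requirements. For self-fullness of each $E_l$ we need, for each $j$ and each class-representative $k$, the requirement $SF_{j,k}$: if $\phi_j : E_l \to E_l$ then $[k]_{E_l}$ meets $\range(\phi_j)$. The diagonalization strategy is the familiar one: reserve a fresh singleton $w$, wait for $\phi_j(w)\!\downarrow$; if $\phi_j(w)$ already lies in the class of some reserved witness $z$ we collapse $w$ with $z$ to kill the reduction, otherwise we are free to collapse $w$ into $k$'s class later — this is exactly the kind of bookkeeping handled in Lemmas~\ref{unwantedCollapse} and~\ref{unwantedCollapse2}, and I would cite/adapt those rather than redo the combinatorics. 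For incomparability of the $E_l$'s among themselves I would add requirements $N_{l,l',j}$: $\phi_j$ is not a reduction $E_l \to E_{l'}$ (for $l\ne l'$); since the $A$-parts are literally the same and the singleton-collapse patterns are chosen to be pairwise "generic" relative to each other, a standard diagonalization using fresh singletons of $E_l$ handles this. To get \HSF rather than merely self-full, here is the key point: by Theorem~\ref{SFCCRsAreHSF} it suffices to make each $E_l$ \emph{self-full and \CCR}. Self-fullness comes from the $SF$-requirements. For \CCR, note that since $A$ is non-universal, $A$ is not $\equiv$-above the universal ceer, but more usefully: if $A \le A \oplus \Id_k$ for all $k$ fails to be universal, then in particular $\Id \not\le A$ can fail — so $A$ might be light. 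When $A$ is dark, $A$ is already \CCR by Lemma~\ref{darksAreCCR} and the $A$-part alone witnesses \CCR-ness of $E_l$ (given any $\Pi^0_1$ equivalence relation $C$ with infinitely many classes, restrict attention to the $A$-part). When $A$ is light (or finite), I would instead build the singleton-collapsing pattern of $E_l$ so that $E_l$ itself is dark — i.e. add requirements ensuring $E_l$ has no infinite c.e.\ transversal — which is compatible with $A \oplus \Id_n \le E_l$ as long as $A$ is non-universal and we only ever add collapses; then $E_l$ is dark, hence \CCR, hence (being self-full) \HSF by Corollary~\ref{cor:hereditarily}'s proof method.

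Next I would verify the strong-minimal-cover property. Because every collapse we perform is between two singletons both lying in the "$\Id_\omega$-part", the $A$-part of $E_l$ remains a computable union of $E_l$-classes isomorphic to $A$, and the quotient of $E_l$ by the $A$-part is computably a ceer with all classes among the singletons; this gives $A \oplus \Id_n \le E_l$ for every $n$ (map the $\Id_n$ into $n$ distinct never-collapsed singletons, which exist because diagonalization uses only finitely many singletons at a time and reserves fresh ones — this needs the argument that infinitely many singletons survive, which follows from the finite-injury bound in Lemma~\ref{finiteReinitialization}-style reasoning). For the converse implication $X < E_l \Rightarrow (\exists k)\,X \le A \oplus \Id_k$: suppose $g : X \to E_l$ is a reduction with $\range(g)$ not meeting all $E_l$-classes (strictness of $X < E_l$ plus self-fullness of $E_l$ forces this — if $g$ were onto all classes and $X < E_l$ this would contradict... actually one argues: if $X \le E_l$ but $X \not\le A\oplus\Id_k$ for every $k$, then $\range(g)$ meets infinitely many singleton classes \emph{and} captures the whole $A$-part up to finitely much, whence one can reconstruct a reduction $E_l \to E_l$ hitting every class — contradicting self-fullness unless $g$ was already essentially onto, forcing $X \equiv E_l$). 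This is the standard "self-full strong minimal cover" dichotomy from \cite[Theorem~7.9]{joinmeet}; I would mirror that argument, the only new ingredient being the presence of the fixed $A$-part, which only makes the "large" side of the dichotomy give $X \le A \oplus \Id_k$ rather than $X \le \Id_k$.

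The main obstacle I anticipate is the interaction between the self-fullness diagonalization and the strong-minimal-cover requirement: the $SF$-strategies want to collapse singletons down aggressively, but the minimal-cover property needs infinitely many singletons to survive \emph{and} needs the surviving structure above $A$ to be controllable enough that any $X < E_l$ is pushed below $A \oplus \Id_k$. Resolving this requires the usual care in ordering priorities so that a strategy for $SF_{j,k}$ only ever commandeers finitely many singletons and, once satisfied, releases the rest; combined with the $N_{l,l',j}$ requirements this is a finite-injury construction whose injury bound (Lemma~\ref{finiteReinitialization}-style) guarantees cofinitely many singletons are untouched by any fixed strategy, hence infinitely many singletons survive all strategies. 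The second subtlety is the case split on whether $A$ is dark or light, needed to secure \CCR-ness/darkness of $E_l$; but in both cases the fix is local — either inherit \CCR from $A$'s $A$-part, or add darkness requirements for $E_l$ — and neither conflicts with the monotone (collapse-only) construction, since making $E_l$ dark only requires \emph{refusing} to enumerate an infinite c.e.\ transversal, which is automatic as long as we collapse witnesses appropriately.
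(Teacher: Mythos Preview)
Your proposal has two genuine gaps, both concerning places where you underestimate how hard it is to use ``fresh singletons'' when the ceer $A$ sits inside $E_l$.

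\textbf{The \CCR / darkness case split is broken.} If $A$ is light then $\Id \le A \le E_l$, so $E_l$ automatically has an infinite c.e.\ transversal and is light; you cannot ``add requirements ensuring $E_l$ has no infinite c.e.\ transversal.'' So the light branch of your split is impossible. The dark branch also does not work as stated: to show $E_l$ is \CCR you need, for every $\Pi^0_1$ equivalence relation $C$ on $\omega$ with infinitely many classes, a pair $x\rel{E_l}y$ with $x\cancel{\rel{C}}y$. Restricting $C$ to the $A$-part (the evens) gives a $\Pi^0_1$ relation that may have only finitely many classes even when $C$ has infinitely many, so \CCR-ness of $A$ does not transfer. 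The paper avoids this entirely by building \CCR-ness of $E_l$ \emph{directly}: a $CCR^l_j$-requirement waits for a pair $\langle x,y\rangle\in W_j$, picks a new odd $z$, waits for $\langle x,z\rangle$ or $\langle y,z\rangle$ to enter $W_j$, and then $E_l$-collapses $z$ with the appropriate one of $x,y$. Note this may collapse an odd number with an even number, contrary to your stipulation that collapses stay within the singleton part.

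\textbf{The incomparability diagonalization is not ``standard.''} Your $N_{l,l',j}$ strategy picks fresh singletons $a_0,a_1$ in $E_l$ and wants to control whether $\phi_j(a_0)\rel{E_{l'}}\phi_j(a_1)$. But $\phi_j(a_0),\phi_j(a_1)$ may land in the $A$-part of $E_{l'}$ (i.e., be $E_{l'}$-equivalent to even numbers), and then whether they collapse is dictated by $A$, over which you have no control. This is exactly where the non-universality of $A$ must be used, and you never invoke it for this. The paper's $D^{l,l'}_j$-strategy handles this by threatening to code a fixed universal ceer $T$ into $E_l$ via a growing sequence $a_0,a_1,\ldots$ of odd parameters: as long as $\phi_j$ keeps up (so in particular keeps landing in the $A$-part or in finitely many restrained classes), this would yield $T\le A\oplus\Id_k$, contradicting non-universality of $A$; hence eventually either $\phi_j$ fails to be a reduction on the $a_i$'s, or two images $\phi_j(a_i),\phi_j(a_k)$ land in distinct unrestrained odd classes of $E_{l'}$, at which point a genuine diagonalization becomes possible. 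This idea is the crux of the construction and is missing from your outline.

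A smaller point: the paper's $SF$-requirements are phrased for c.e.\ \emph{sets} $W_i$ (``if $W_i$ meets infinitely many odd-only classes then $W_i$ meets $[k]_{E_l}$'') rather than for reductions $\phi_j$; this single family simultaneously yields self-fullness of $E_l$ and the strong-minimal-cover dichotomy, so no separate minimal-cover argument is needed.
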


\begin{proof}
We construct ceers $E_l$ with the property that the function $x \mapsto 2x$ is
a reduction $A \leq E_l$,
satisfying the following requirements:

\begin{itemize}

\item[$CCR^l_j$:] If $V_j$ is a co-c.e. equivalence relation with at least
    two distinct equivalence classes, then there are $x,y$ so that $x E_l y$
    and $x\cancel{\rel{V_j}}y$. (Here, $V_{j}=W_{j}^{c}$, i.e.\ the
    complement of the c.e. set $W_j$.)

\item[$SF_i^{k,l}$:] If $W_i$ intersects infinitely many $E_l$-classes which do not
    contain an even number, then $W_i$ intersects $[k]_X$.

\item[$D_j^{l,l'}$:] $\phi_j$ is not a reduction of $E_l$ to $E_{l'}$.

\end{itemize}
We first argue that any sequence of ceers $(E_l)_{l\in \omega}$ constructed with the above properties will be incomparable, \HSF and will have the properties that $A\oplus \Id_n\leq E_l$ and $X<E_l \Rightarrow (\exists k)[X\leq A\oplus \Id_k]$.

The $E_l$ are clearly incomparable by the $D$-requirements. By the fact that we will make $x\mapsto 2x$ be a reduction of $A$ to $E_l$, it is immediate that $A\leq E_l$ for each $l$. Thus if there is some pair $n,l$ so that $A\oplus \Id_n\not\leq E_l$, then $E_l\equiv A\oplus \Id_k$ for some $k\leq n$ by \cite[Lemma 2.5]{joinmeet}. Let $k$ be minimal so that there is some $E_l$ with $E_l\equiv A\oplus \Id_k$. Then $E_l\leq E_{l'}$ for every other $l'$, contradicting incomparability. So, $A\oplus \Id_n\leq E_l$ for every pair $n,l$.

Suppose that $\phi$ is a reduction witnessing $X\leq E_l$ for some ceer $X$. There are two cases to consider. Either $\phi$ intersects only finitely many, say $k$, classes which contain no even number, in which case $X\leq A\oplus \Id_k$, or by the $SF$-requirements applied to the c.e.\ image of $\phi$, $\phi$ is onto the classes of $E_l$. In the latter case, $X\equiv E_l$ \cite[Lemma 1.1]{joinmeet}. This shows that $X<E_l \Rightarrow (\exists k)[X\leq A\oplus \Id_k]$. Applying the same argument to any reduction $\phi$ of $E_l$ to itself shows that $\phi$ must be onto the classes of $E_l$. Thus $E_l$ is self-full. Finally, by the CCR-requirements, $E_l$ is also \CCR. It follows by Theorem \ref{SFCCRsAreHSF} that the $E_l$ are each \HSF.

\subsubsection*{The strategies}	
We now give the strategies for each requirement:

To achieve that $f(x)=2x$ is a reduction of $A$ to $X$ we guarantee that
throughout the construction we $E_l$-collapse a pair of distinct even numbers $2x,2y$ if and only if $x\rel{A} y$.

For the $CCR^l_j$-requirement, we wait until we see a pair $\langle x,y
\rangle$ so that $\langle x,y \rangle$ appears in $W_j$. At this point we
pick a new odd $z$, we wait for either $\langle x, z\rangle$ or $\langle y,
z\rangle$ to appear in $W_j$. While waiting, restrain the equivalence class
of $z$ from being $E_l$-collapsed to other classes due to the action of
lower-priority requirements. If and when the wait is over, drop the
restraint. If at that point we see $\langle x, z\rangle \in W_j$ then we
$E_l$-collapse $x$ and $z$. If we see $\langle y, z\rangle \in W_j$ but not as
yet $\langle x, z\rangle \in W_j$, then we $E_l$-collapse $y$ and $z$. Note
that if $V_j$ is an equivalence relation with at least two equivalence
classes then eventually such a pair $\langle x, y\rangle$ appears in $W_j$.
Then for every $z$ either $x \cancel{V_j} z$ or $y \cancel{V_j} z$. So
whatever $z$ we pick, eventually we see $\langle x, z\rangle \in W_j$ or
$\langle y, z\rangle \in W_j$.

For the $SF_i^{k,l}$-requirement, we wait until $W_i$ enumerates an odd number
$x$ which is not currently $E_l$-equivalent to any even number, nor lies in any
$E_l$-class currently restrained by a higher-priority requirement. We then
$E_l$-collapse this $x$ with $k$.

For the $D_j^{l,l'}$-requirement: We would like to employ the natural direct diagonalization strategy, i.e. take two new numbers in $E_l$ and choose to collapse them if and only if we do not choose to collapse their $\phi_j$-image in $E_{l'}$. The problem is that the $\phi_j$-images in $E_{l'}$ may be $E_{l'}$-equivalent to even numbers (in addition to the usual problem of them being equivalent to the finitely many numbers restrained by higher-priority strategies). So, we cannot control whether or not they collapse, because this is determined by $A$. The solution is to use the non-universality of $A$. In particular, as long as it appears that $\phi_j$ is giving a reduction of $E_l$ into the classes of even numbers in $E_{l'}$ (i.e. into $A$), we will encode a universal ceer into $E_l$. This will have one of two possible outcomes: Either $\phi_j$ will eventually have elements in its range which are not $E_{l'}$-equivalent to even numbers, or we will permanently witness a diagonalization. In either case, our threat to encode a universal ceer into $E_l$ will have a finite outcome, and we will not actually make $E_l$ universal.

More explicitly, the $D_j^{l,l'}$-strategy is as follows: We fix a universal ceer $T$. As we proceed, we will choose a sequence of parameters $a_0,a_1,\ldots$ which are not $E_l$-equivalent to any even numbers, and we will ensure $a_i \rel{E_{l,s}} a_k$ if and only if $i\rel{T_s}k$. At every stage, we will have only chosen finitely many parameters, thus will have encoded only a finite fragment of $T_s$. When we see that for every pair $i,k$ for which we have chosen parameters, $a_i \rel{E_l} a_k$ if and only if $\phi_j(a_i) \rel{E_{l'}} \phi_j(a_k)$, we will choose a new parameter $a_m$ for the least $m$ where we had not yet chosen $a_m$. In particular, we take one more step towards making $i\mapsto a_i$ be a reduction of $T$ to $E_{l'}$. We stop this strategy if we see two numbers $i,k$ so that $\phi_j(a_i),\phi_j(a_k)$ are neither $E_{l'}$-equivalent to each other nor to an even number nor a number restrained for a higher priority requirement. If this happens, we are ready to perform a direct diagonalization. We $E_l$-collapse $a_i$ and $a_k$, and we restrain the pair $\phi_j(a_i),\phi_j(a_k)$ from any future collapse in $E_{l'}$. This strategy either ends with the direct diagonalization as described, or by $\phi_j$ being partial, or by there being a pair $i,k$ so that $a_i \rel{E_{l}}a_k$ if and only if $\phi_j(a_i) \cancel{\rel{E_{l'}}} \phi_j(a_k)$. In any case, $\phi_j$ will not be a reduction of $E_l$ to $E_{l'}$.
	
Building the sequence $(E_l)_{l\in \omega}$ is carried out via a standard finite injury priority construction.

\subsection*{The construction:}
We build $(E_l)_{l\in \omega}$ in stages by using the collapsing technique as described at the
end of Section~\ref{sct:introduction}.

To \emph{initialize} a $D$-requirement or a $CCR$-requirement at a stage
means to cancel its parameters, if any, and to drop its restraint, if any. Further, a $D$-requirement which was satisfied is no longer considered satisfied after initialization. To
initialize an $SF$-requirement means to do nothing.

\subsubsection*{Stage $0$}  Let $X_{0}=\Id$.
All requirements are initialized.

\subsubsection*{Stage $s+1$}
We begin the stage by collapsing even numbers in every $E_l$ so that $2x \rel{E_{l,s}} 2y$ whenever $x \rel{A_s} y$.
	
If $R=CCR_j^l$, we say that $R$ is \emph{already satisfied at $s+1$} if there is
$\langle
w,z\rangle \in W_{j,s}$ so that $w \rel{E_{l,s}} z$.
If $R=SF_i^{k,l}$, we say that $R$ is \emph{already satisfied at $s+1$} if there is
$x \in W_{i,s}$ so that $x \rel{E_{l,s}} k$.

A \emph{new} number $x$ at stage $s+1$ is an odd  number bigger than any
number so far mentioned in the construction. In particular, if $x$ is new at stage $s+1$, then $[x]_{E_{l,s}}=\{x\}$ for each $l$.

Scan the requirements $R$ in decreasing order of priority. Suppose you have
scanned all $R'<R$, and distinguish the following cases:

\subsubsection*{$R=CCR_{j}^l$}
If $R$ is already satisfied then it drops its restraint if any, and we move
to the next requirement.

If the requirement is not already satisfied at $s+1$ and it is initialized
(i.e. has no chosen parameters), but we now see numbers $x,y$ with $\langle
x,y \rangle \in W_{j,s}$, then take a new odd number $z$ which becomes the
\emph{current witness} of $R$. \emph{Restrain} the equivalence class of $z$
from being $E_l$-collapsed to other classes due to  lower-priority
requirements.
After this, if we see $\langle x, z\rangle \in W_{j,s}$ or $\langle y,
z\rangle \in W_{j,s}$, then \emph{drop} the restraint, \emph{$E_l$-collapse}
$x$ and $z$, or $y$ and $z$ according to the case. Whether $CCR_{j}^l$ chooses
$z$, or we cause $E_l$-collapse, we say that $CCR_{j}^l$ has \emph{acted at stage $s+1$}.
After this action, \emph{stop the stage} and \emph{initialize} all lower-priority requirements. If we neither choose $z$ nor cause $E_l$-collapse, then proceed to the next requirement.

\subsubsection*{$R=SF_{i}^{k,l}$}
Suppose the requirement is not already satisfied at $s+1$, but we now see an odd
number $x \in W_{i,s}$ not in the equivalence class of any number currently
restrained by some higher priority $R'$, nor $E_{l,s}$-equivalent to any even
number. We view this $x$ as the \emph{once-and-for-all witness} of the
requirement. Then \emph{$E_l$-collapse} $x$ with $k$. We say that $SF_{i}^{k,l}$ has
\emph{acted at stage $s+1$}. After the action \emph{stop the stage} and
\emph{initialize} all requirements $R'$ having lower-priority than $R$. If no such $x$ is found, then proceed to the next requirement.

\subsubsection*{$R=D_{j}^{l,l'}$}
If $R$ is currently initialized and not yet satisfied, then choose two new odd numbers $a_0,a_1$ which
become the \emph{current witnesses} of $R$. \emph{Restrain} the classes of
the two witnesses from $E_l$-collapses due to lower-priority requirements. After choosing $a_0$ and $a_1$ and placing restraint, \emph{stop the stage} and
\emph{initialize} all requirements $R'$ having lower-priority than $R$.

If $R$ already has witnesses, let $m$ be least so that $a_m$ is not yet chosen. For $i,k<m$, $E_l$-collapse $a_i$ with $a_k$ if and only if $i\rel{T_s} k$ (recall that $T$ is our fixed universal ceer). If this causes any new collapses, then we say we have caused $E_l$-collapse. We distinguish the following 3 cases:

\emph{Case 1:} There are $i,k<m$ so that $\phi_j(a_i),\phi_j(a_k)$ both converge by stage $s$ to numbers which are not $E_{l',s}$-equivalent to an even number nor a number restrained by higher-priority requirements and $\phi_j(a_i)\cancel{\rel{E_{l',s}}}\phi_j(a_k)$. We $E_l$-collapse $a_i$ with $a_k$ and place a permanent restraint to not allow lower-priority requirements to cause $\phi_j(a_i)$ to $E_{l'}$ collapse with $\phi_j(a_k)$. We let $\phi_j(a_i)$ and $\phi_j(a_k)$ permanently be parameters $c$ and $d$ of the requirement. We now say that the requirement is satisfied (this is permanent unless the requirement is reinitialized). After this action, \emph{stop the stage} and \emph{initialize} all lower-priority requirements.

\emph{Case 2:} Not Case 1 and $\phi_j(a_i)$ converges by stage $s$ for every $i<m$ and for each $i,k<m$, $a_i \rel{E_{l,s}} a_k$ if and only if $\phi_j(a_i)\rel{E_{l,s}}\phi_j(a_k)$. In this case, we define $a_m$ to be a new odd number. After this action, \emph{stop the stage} and \emph{initialize} all lower-priority requirements.

\emph{Case 3:} Not Case 1 nor Case 2: If we have caused $E_l$-collapse, then \emph{stop the stage} and \emph{initialize} all lower-priority requirements. Otherwise, move to the next requirement.

Notice that eventually we are sure to move on to stage $s+2$ since the action of
any initialized $D$-requirement stops the stage, and there are cofinitely
many initialized requirements at any stage.

\subsection*{The verification:}
We split the verification into the following lemmata. Notice that at each
stage, exactly one requirement acts.

Let us say that a witness for a requirement $R$ is \emph{$E_l$-active} if it is a parameter $z$ for a $CCR_j^{l}$-requirement or is an $a_i$ parameter for a $D_j^{l,l'}$-requirement or is a $c$ or $d$ parameter for a $D_j^{l',l}$-requirement. That is, it is a parameter for which some requirement restrains $E_l$-collapse.

\begin{lemma}\label{lem:hs2}

At every stage $s$, if $x$ and $y$ are $E_l$-active witnesses for different requirements, then $x\cancel{\rel{E_l}} y$.
	
At every stage $s$, if $x,y$ are $E_l$-active witnesses and $x\rel{E_{l,s}} y$, then $x=a_i$ and $y=a_j$ for a $D$-requirement and $i\rel{T_s}j$.

At every stage $s$, if $x$ is $E_l$-active, then $x$ is not $E_{l,s}$-equivalent to any even number.
\end{lemma}

\begin{proof}
We prove all three claims by simultaneous induction on $s$. The lemma is certainly true at stage $0$, as there are no $E_l$-active witnesses. If the claim is true at $s$, then
at $s+1$ at most one $R$ can effect $E_l$. Either $R$ $E_{l,s+1}$-collapses the class of
its witness with some other class, or (if $R$ is a $D$-requirement)
$E_{l,s+1}$-collapses classes of its witnesses, or $R$ appoints new $E_l$-active
witnesses.

If $R$ is a $CCR$-requirement, then $R$'s witness $z$ ceases to be active. So, it has collapsed $[z]_{E_{l,s}}$ to some other class $[w]_{E_{l,s}}$. By inductive hypothesis, the only member of $[z]_{E_{l,s}}$ which was either $E_l$-active or even was $z$ itself. Since $z$ is no longer active at stage $s+1$, we have added neither an $E_l$-active number nor an even number to $[w]_{E_{l,s+1}}$.

If $R$ is a $D$-requirement, it collapses $a_i$ with $a_j$ only if $i\rel{T_s} j$. Since neither $a_i$ nor $a_j$ were $E_{l,s}$-equivalent to another requirement's $E_l$-active witness or an even number by inductive hypothesis, the combined class maintains this property.

If $R$ is an $SF_i^{k,l}$-requirement, then it collapses its once and for all witness $x$ to $k$. This $x$ is not $E_{l,s}$-equivalent to any $E_{l,s+1}$-active number since it is not $E_{l,s}$-equivalent to a number restrained by a higher-priority requirement (and this action initializes all lower-priority requirements) nor an even number, and it initializes all lower-priority requirements. Thus it has added neither an even number nor an $E_l$-active number to $[k]_{E_{l,s+1}}$.

Finally, if $R$ appoints new $E_l$-active witnesses, these witnesses are either new, so their classes are currently disjoint
singletons or are the $c$ and $d$ for a $D$-requirement, which are appointed only if they are in distinct $E_l$-classes from every other $E_l$-active or even number.
\end{proof}

\begin{lemma}\label{lem:hs4}
	The map $i\mapsto 2i$ is a reduction witnessing $A \leq E_l$ for each $l$.
\end{lemma}
\begin{proof}
	Since no active parameter is ever equivalent to an even number by Lemma \ref{lem:hs2}, we never cause collapse of two even numbers aside from at the beginning of each stage. At that point, we collapse $2x \rel{E_{l,s}} 2y$ if and only if $x \rel{A_s} y$. Since we never cause any other collapse among even numbers, we have $2x \rel{E_l} 2y$ if and only if $x\rel{A} y$.
\end{proof}

\begin{lemma}\label{lem:hs1}
	Every requirement is initialized finitely often, eventually stops acting, and
	sets up only a finite restraint.
\end{lemma}

\begin{proof}
	The proof is by induction on the priority ranking of the requirement $R$,
	since every requirement acts only finitely often after each initialization. This is clear for each requirement except $D$-requirements. Clearly they can act via Case 1 at most once after last initialization. Suppose towards a contradiction that it acts infinitely often via Case 2 after last initialization. Then $\psi:i\mapsto a_i$ is a reduction of $T$ to $E_{l'}$. Further the range of $\psi$ is contained in the classes which contain even numbers and the finitely many (by inductive hypothesis) classes restrained for higher priority strategies. Thus, using the fact that $E_{l'}$ restricted to the even numbers is equivalent to $A$ by Lemma \ref{lem:hs4}, we can alter $\psi$ to give a reduction of $T$ to $A\oplus \Id_k$ for some $k$. But since $A$ is assumed to be non-universal, and the universal degree is uniform join irreducible \cite[Prop. 2.6]{UniversalCeers}, $A\oplus \Id_k$ is non-universal. This is a contradiction. Finally, since the requirement acts only finitely often in Case 2, it defines only finitely many parameters $a_i$. Thus, it can only cause $E_l$-collapse finitely often, and can only initialize lower priority requirements in Case 3 finitely often. So, $D$-requirements also act finitely often,  place finite restraint, and initialize lower-priority requirements finitely often after their last initialization.
\end{proof}

\begin{lemma}\label{lem:hs3}
Every requirement is eventually satisfied.
\end{lemma}

\begin{proof}
Notice that $CCR$- or $SF$-requirements can never be injured after they have
acted. If they collapse, then the collapse permanently satisfies the requirement. Suppose
that every $R'<R$ is eventually satisfied.

We now distinguish the various possibilities for $R$.

\subsubsection*{$R=SF_{i}^{k,l}$}
We must only worry if $W_{i}$ intersects infinitely many $E_l$-classes
not containing even numbers. In this case, eventually some odd number $x$
appears in $W_{i}$ not as yet $E_l$-equivalent to any even number and avoiding
the finite restraint imposed by the higher-priority requirements. So $R$
acts by picking such an $x$ (the once-and-for-all witness of the
requirement), and $x$ is $E_l$-collapsed to $k$, which makes $R$ permanently
satisfied.

\subsubsection*{$R=CCR_{j}^l$}
By the fact that lower-priority $SF$-requirements will choose their
once-and-for-all witnesses avoiding higher-priority restraints, and thus not interfering with $CCR_j^l$, the
only possible injury to $CCR_j^l$ can be made, thanks to Lemma~\ref{lem:hs2},
by higher-priority requirements. By Lemma~\ref{lem:hs1} let $s_{0}$ be
the least stage such that these higher-priority requirements do not act at
any $s \ge s_0$. If $V_{j}$ is an equivalence relation with at least two
distinct equivalence classes, then eventually we either see at some point
that $R$ is already satisfied without having acted (and so the requirement is
permanently satisfied), or we see at some point a pair $\langle x, y \rangle$
to appear in $W_{j}$. At that point, we pick the final witness $z$, a new odd
number. At some later stage we collapse $z$ with $x$ or $y$, which makes $R$
permanently satisfied.

\subsubsection*{$R=D_{j}^{l,l'}$}
By the fact that lower-priority $SF$-requirements will choose their
once-and-for-all witnesses avoiding higher-priority restraint, and thus not interfering with $D_i$, the
only possible injury to $D_i$ can be made, thanks to Lemma~\ref{lem:hs2}, by
higher-priority requirements. By Lemma~\ref{lem:hs1} let $s_{0}$
be the least stage such that these higher-priority requirements do not act at any
$s \ge s_0$. Requirement $D_j^{l,l'}$ is never re-initialized after $s_{0}$. After stage $s_0$, we have several possible outcomes for $D_j^{l,l'}$. We have already shown that it cannot take Case 2 infinitely often. If it ever takes Case 1, then we have two numbers $a_i,a_k$ so that $a_i \rel{E_l} a_k$ and $\phi_j(a_i)\cancel{\rel{E_{l'}}}\phi_j(a_k)$. The remaining possibility is that it takes Case 3 co-finitely often. In this case, we see that either $\phi_j$ is not total or is not a reduction (either case is witnessed on the finite set $\{a_i\mid i<m\}$).
\end{proof}

The proof of the theorem is now complete.
\end{proof}

Note the following corollary which follows from letting $A$ be $\Id$:

\begin{corollary}\label{thm:light-hsf}
	There are light ceers which are \HSF.
\end{corollary}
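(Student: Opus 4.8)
The plan is to specialize Theorem~\ref{thm:HSF-Istrongminimalcovers} to the case $A=\Id$, exactly as the note preceding the corollary suggests. First I would observe that $\Id$ is a non-universal ceer: it is the least infinite ceer, hence certainly not the greatest element of $\Ceers$, so it is non-universal in the strong sense required by the theorem. Applying Theorem~\ref{thm:HSF-Istrongminimalcovers} with $A=\Id$ then produces infinitely many incomparable \HSF ceers $(E_l)_{l\in\omega}$ such that $\Id\oplus\Id_n\leq E_l$ for all $n,l\in\omega$.

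Next I would take $n=0$ and invoke the convention recalled just before Theorem~\ref{thm:HSF-Istrongminimalcovers} that $\Id\oplus\Id_0$ represents $\Id$, which gives $\Id\leq E_l$ for each $l$. By the characterization of the light ceers recalled in the introduction (a ceer $R$ is light if and only if $\Id\leq R$), every $E_l$ is light; in particular each $E_l$ has infinitely many classes, so it is a genuine light ceer and not a finite one. Since each $E_l$ is also \HSF by the theorem, each $E_l$ is a light \HSF ceer, which is precisely the assertion of the corollary (indeed it yields infinitely many incomparable such ceers).

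There is no real obstacle here: the entire mathematical content lives in Theorem~\ref{thm:HSF-Istrongminimalcovers}, and what remains is only the two immediate verifications above, namely that $\Id$ is non-universal and that $\Id\leq E_l$ forces lightness. If anything deserves a word of care it is simply making explicit the $\Id\oplus\Id_0=\Id$ convention so that the inequality $A\oplus\Id_n\leq E_l$ really does specialize to $\Id\leq E_l$.
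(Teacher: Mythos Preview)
Your proposal is correct and follows exactly the approach the paper takes: the paper's entire proof is the single remark that the corollary follows from letting $A=\Id$ in Theorem~\ref{thm:HSF-Istrongminimalcovers}, and you have simply spelled out the two trivial verifications (non-universality of $\Id$ and that $\Id\leq E_l$ implies lightness) implicit in that remark.
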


We conclude by showing that the property of being hereditarily self-full is
itself hereditary in one sense and not in another.

\begin{observation}\label{obs:not-hereditarily}
If $Z$ is \HSF, then there is a self-full ceer $X$ so that $Z\oplus X$ is not
\HSF.
\end{observation}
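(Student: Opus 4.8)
The plan is to deduce this immediately from Theorem~\ref{thm:not-closed} together with the basic algebra of $\oplus$, with no new construction needed. First I would invoke Theorem~\ref{thm:not-closed} to fix self-full ceers $P$ and $Q$ whose uniform join $P\oplus Q$ is \emph{not} self-full, and take $X:=P$. Since $P$ is self-full, the only thing left to verify is that $Z\oplus X$ fails to be \HSF, and for this I would exhibit the self-full ceer $Q$ itself as a witness: the claim becomes that $(Z\oplus X)\oplus Q$ is not self-full.

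To see this, I would use the characterization (recalled in the introduction) that a ceer $R$ is self-full if and only if $R\oplus\Id_1\nleq R$. Since $P\oplus Q$ is not self-full we have $(P\oplus Q)\oplus\Id_1\leq P\oplus Q$, and then, using monotonicity of $\oplus$ in each coordinate together with its associativity and commutativity up to $\equiv$,
\[
\big((Z\oplus X)\oplus Q\big)\oplus\Id_1\equiv Z\oplus\big((P\oplus Q)\oplus\Id_1\big)\leq Z\oplus(P\oplus Q)\equiv (Z\oplus X)\oplus Q.
\]
Hence $\big((Z\oplus X)\oplus Q\big)\oplus\Id_1\leq (Z\oplus X)\oplus Q$, so $(Z\oplus X)\oplus Q$ is not self-full, and therefore $Z\oplus X$ is not \HSF.

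There is essentially no obstacle here: the only points requiring a little care are that $\oplus$ is monotone and is associative and commutative up to computable equivalence (so that the middle equivalence above is legitimate), and that non-self-fullness is a property of the $\equiv$-degree, hence insensitive to the equivalences used. I would also note that the hypothesis that $Z$ is \HSF is not actually used in the argument; its only role is cosmetic, guaranteeing that $Z\oplus X$ is \emph{itself} self-full (being the join of the \HSF ceer $Z$ with the self-full ceer $X=P$), so that the example produced is genuinely a self-full ceer which is not \HSF rather than one which fails self-fullness outright.
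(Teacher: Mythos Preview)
Your proposal is correct and follows essentially the same route as the paper: fix self-full $P,Q$ with $P\oplus Q$ not self-full (via Theorem~\ref{thm:not-closed}), set $X=P$, and observe that $(Z\oplus X)\oplus Q\equiv Z\oplus(P\oplus Q)$ is not self-full, so $Z\oplus X$ is not \HSF. The paper's proof is two lines and simply asserts that $Z\oplus X\oplus Y$ is non-self-full; you have additionally spelled out, via the $\Id_1$ criterion and monotonicity of $\oplus$, the easy fact that non-self-fullness is preserved under taking $\oplus$ with any ceer, and you correctly note that the hypothesis on $Z$ plays no role in the argument itself.
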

\begin{proof}
Let $Z$ be \HSF, and let $X$ and $Y$ be self-full so that $X\oplus Y$ is
non-self-full. Then $Z\oplus X$ is not \HSF because $Z\oplus X\oplus Y$ is
non-self-full.
\end{proof}

\begin{observation}\label{HSFclosed}
If $X$ is \HSF and $Y$ is \HSF, then $X\oplus Y$ is \HSF.
\end{observation}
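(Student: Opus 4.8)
The plan is to reduce the claim to a single application of associativity and commutativity of $\oplus$ together with the defining property of hereditary self-fullness. Suppose $X$ and $Y$ are both \HSF, and let $W$ be an arbitrary self-full ceer; I must show $(X\oplus Y)\oplus W$ is self-full. First I would observe that $\oplus$ is associative and commutative up to $\equiv$, and that self-fullness is invariant under $\equiv$ (a fact already noted in the introduction: a degree containing a self-full ceer consists only of self-full ceers). Hence it suffices to show $X\oplus(Y\oplus W)$ is self-full.

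The key step is to apply the two hypotheses in the right order. Since $Y$ is \HSF and $W$ is self-full, $Y\oplus W$ is self-full. Now since $X$ is \HSF and $Y\oplus W$ is self-full, $X\oplus(Y\oplus W)$ is self-full. By associativity and the $\equiv$-invariance of self-fullness, $(X\oplus Y)\oplus W$ is self-full as well. As $W$ was an arbitrary self-full ceer, $X\oplus Y$ is \HSF.

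There is essentially no obstacle here; the only thing that needs a word of justification is the $\equiv$-invariance of self-fullness under reassociating the iterated uniform join, but this is standard and was already invoked in the introduction, so I would simply cite it or state it in passing. The entire argument is a two-line chain: $W$ self-full $\Rightarrow$ $Y\oplus W$ self-full (as $Y$ is \HSF) $\Rightarrow$ $X\oplus(Y\oplus W)$ self-full (as $X$ is \HSF) $\Rightarrow$ $(X\oplus Y)\oplus W$ self-full.
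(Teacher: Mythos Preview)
Your proposal is correct and follows essentially the same approach as the paper's own proof: take an arbitrary self-full $E$, use that $Y$ is \HSF to get $Y\oplus E$ self-full, then use that $X$ is \HSF to get $X\oplus(Y\oplus E)$ self-full, and conclude via associativity. The paper leaves the associativity/$\equiv$-invariance step implicit where you spell it out, but the argument is the same.
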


\begin{proof}
Let $E$ be any self-full ceer. Then since $Y$ is \HSF, we see that $Y\oplus
E$ is self-full. Thus, since $X$ is \HSF, we see that $X\oplus (Y\oplus E)$
is self-full. We conclude that whenever $E$ is self-full, $(X\oplus Y)\oplus
E$ is self-full, showing that $X\oplus Y$ is \HSF.
\end{proof}


\end{document}